\theoremstyle{plain}
\newtheorem*{maintheorem}{Main Theorem}
\newcommand{\RR}{\mathbb{R}} 
\newcommand{\CC}{\mathbb{C}} 
\newcommand{\NN}{\mathbb{N}} 
\newcommand{\ZZ}{\mathbb{Z}} 
\newcommand{\DD}{\mathbb{D}} 
\newcommand{\rad}{\mathfrak{r}}
\DeclareMathOperator{\Real}{Re}
\DeclareMathOperator{\dd}{d\!}
\DeclareMathOperator{\type}{type}
\DeclareMathOperator{\Zero}{\mathsf{Zero}}
\DeclareMathOperator{\Pol}{\mathsf{Pol}}
\begin{document}

\titlerunning{Decrease in growth of entire and meromorphic functions} 
\authorrunning{B. N. Khabibullin} 

\title{Decrease in growth of entire and  meromorphic functions}

\author{\firstname{B.~N.}~\surname{Khabibullin}}
\email[E-mail: ]{khabib-bulat@mail.ru}
\affiliation{Institute of Mathematics with Computer Center of the Ufa Science Center of the Russian Academy of Sciences, 112, Chernyshevsky str., Ufa,  Republic of Bashkortostan, Russian Federation,  450008}
\affiliation{Bashkir State Pedagogical University named after M. Akmulla, 
 3-a, Oktyabrskaya revolyutsiya str., Ufa,  Republic of Bashkortostan, Russian Federation, 450077}

\received{Marz 5, 2025}

\begin{abstract} 
We solve  the following three problems. 

1. How much can the radial growth of an entire function $f$ be reduced by multiplying it by some nonzero entire function? We give the answer in terms of the growth of the integral means of $\ln|f|$ over the circles centered at the origin.

2. We estimate the smallest possible radial growth of non zero entire  functions that vanish on a given distribution of points $Z$.  We solve this problem in terms of the growth of the  radial  integral counting function of $Z$.

3. Let $F=f/g$ be a meromorphic function with representations as the ratio of  entire functions $f\neq 0$ and $g\neq 0$. How small can the radial growth of entire functions $f$ and $g$ be in such representations in relation to the growth of the Nevanlinna characteristic of  $F$?

All solutions have a non-asymptotic uniform character, and the obtained inequalities are sharp. 
All of them are based on some main theorem for subharmonic functions, which relies on the Govorov\,--\,Petrenko\,--\,Dahlberg\,--\,Ess\'en inequality and uses our general results on the existence of subharmonic minorants.
\end{abstract}

\subclass{30D35, 30D20, 30D30, 31A05} 

\keywords{entire function, integral mean, distribution of roots, meromorphic function, Nevanlinna characteristic, subharmonic function, Paley problem, Jensen measure} 

\maketitle


\section{Main results}

We denote by   $\NN:=\{1,2,\dots\}$,    $\ZZ$, $\RR$, $\CC$  the sets of \textit{natural, integer, real, complex\/} numbers  respectively. Besides, $\NN_0:=\{0\}\cup \NN$, 
$\overline\NN_0:=\NN_0\cup \{+\infty\}$, $\RR_+:=\bigl\{r\in \RR\bigm| r\geq 0\bigr\}$, 
 $\overline{\RR}:=\RR\cup\{\pm\infty\}$, 
$\overline{\RR}_+:=\RR_+\cup\{+\infty\}$.
All these sets, whenever possible, are provided with a standard order relation and  geometric, algebraic,  topological structures on them.
The \textit{positive part\/} of  a quantity  $a\in\overline{\RR}$ or function $a\colon X\to \overline{\RR}$ is denoted by $a^+:=\sup\{a,0\}$.

 \subsection{Main results for entire 
functions}
Let $f$ be an entire function on the complex plane $\CC$.  We define  \cite{GO}  
\begin{align}
M(r;f)&\underset{r\in \RR_+}{:=} \max_{|z|=r} \bigl|f(z)\bigr|,\label{dfMf}\\
 C(r;f)&\underset{r>0}{:=}\frac{1}{2\pi}\int_{0}^{2\pi} \ln \bigl|f(re^{i\theta})\bigr|\dd \theta, 
\quad C(0,f):=\ln\bigl|f(0)\bigr|.
\label{dfCf}
\end{align}

Our first result is the following multiplier theorem for entire functions. 
 
\begin{theorem}
\label{Cor1} 
For each entire function $f$ with $f(0)\neq 0$ and for every number $p\geq 1$,  there exists an entire function $h\neq 0$ such that 
\begin{equation}\label{eshe}
 M(r;fh)\leq \exp \int_0^{+\infty} C( rt; f)\frac{p^2t^{p-1}\dd t}{(1+t^p)^2} \quad\text{for all $r\in \RR_+$},
\end{equation}
but there is no entire function $h\neq 0$ for which we have the relation
\begin{equation}\label{M/C}
    \liminf_{r\to +\infty}\frac{M(r;fh)}{\exp C(r;f)}=0.
\end{equation}
\end{theorem}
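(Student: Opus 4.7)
The statement splits into a short impossibility direction and a substantive existence direction; I expect the latter to be a direct application of the paper's announced ``main theorem for subharmonic functions'' to $u:=\ln|f|$.

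\emph{Non-existence of $h$ realizing \eqref{M/C}.} Since $\ln|fh|$ is subharmonic on $\CC$, its maximum over $|z|=r$ dominates its circular mean: $\ln M(r;fh)\geq C(r;fh)$. Off the discrete zero set of $fh$ we have $\ln|fh|=\ln|f|+\ln|h|$, hence $C(r;fh)=C(r;f)+C(r;h)$, and so
\[
\frac{M(r;fh)}{\exp C(r;f)}\;\geq\;\exp C(r;h)\qquad(r>0).
\]
For any entire $h\not\equiv 0$ the function $r\mapsto C(r;h)$ is non-decreasing on $(0,+\infty)$ (standard monotonicity of circular means of subharmonic functions) and finite at every $r>0$, so $\liminf_{r\to+\infty}\exp C(r;h)>0$, in contradiction with \eqref{M/C}.

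\emph{Existence of $h$ realizing \eqref{eshe}.} The subharmonic function $u:=\ln|f|$ is finite at the origin since $f(0)\neq 0$. The main subharmonic theorem should produce a subharmonic $v\colon\CC\to[-\infty,+\infty)$ whose Riesz measure is a sum of point masses with multiplicities in $\NN_0$, and such that
\[
\max_{|z|=r}\bigl(u(z)+v(z)\bigr)\;\leq\;\int_0^{+\infty}C(rt;f)\,\frac{p^2t^{p-1}}{(1+t^p)^2}\,\dd t\qquad(r\in\RR_+).
\]
Integrality of its Riesz measure lets a Weierstrass product manufacture an entire $h_0\neq 0$ with $\ln|h_0|$ having the same Riesz measure as $v$; the remainder $v-\ln|h_0|$ is harmonic on the simply-connected plane, so equals $\Real g$ for some entire $g$, and $h:=h_0\,e^g$ is then an entire nonzero function with $\ln|h|=v$. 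Exponentiating the displayed bound yields \eqref{eshe}.

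\emph{Main obstacle.} The technical core is constructing a single $v$ that simultaneously satisfies the sharp integral bound with the \emph{exact} Dahlberg--Ess\'en weight $p^2t^{p-1}/(1+t^p)^2$ and whose Riesz measure is $\NN_0$-valued. The former should come from applying the Govorov--Petrenko--Dahlberg--Ess\'en inequality to $u+v$, after arranging $C(\cdot;h)\leq 0$ so that the $C(\cdot;fh)$ appearing on the right of that inequality can be bounded by $C(\cdot;f)$; the latter is delivered by the paper's ``general results on the existence of subharmonic minorants''. Fusing these two ingredients without losing a multiplicative constant or an $o(1)$ correction — which is what the stated sharpness and non-asymptotic character demand — is where the main work lies.
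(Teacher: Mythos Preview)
Your reduction of both parts to the Main Theorem for subharmonic functions is exactly what the paper does, and your impossibility argument is line-for-line the paper's: $\ln M(r;fh)\geq C(r;f)+C(r;h)$, and the increasing function $r\mapsto C(r;h)$ cannot have $\liminf=-\infty$ unless $h\equiv 0$.

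Where your sketch diverges is in your guess about the \emph{content} of the Main Theorem. The paper's Main Theorem already delivers an entire $h\neq 0$ with $u(z)+\ln|h(z)|$ bounded pointwise by the Dahlberg--Ess\'en integral; it does not hand you a subharmonic $v$ with $\NN_0$-valued Riesz measure, and no such integrality is ever arranged. Inside the proof of the Main Theorem the intermediate minorant $v$ (obtained via a Jensen-measure duality lemma) is a generic subharmonic function, and the passage from $v$ to an entire $h$ is \emph{not} by Weierstrass products realizing $\ln|h|=v$ (which would indeed require integer Riesz measure) but by a separate approximation lemma of Khabibullin--Baiguskarov: for any $v\not\equiv-\infty$ and any $N\geq 0$ there is entire $h\neq 0$ with $\ln|h(z)|$ bounded by the circular mean of $v$ over the circle of radius $(1+|z|)^{-N}$ about $z$. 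This averaging replaces $|z|$ by $|z|+(1+|z|)^{-N}$ in the argument of $\mathsf{C}_u$, and the substantive estimate is showing (with $N:=p-1$) that the resulting error is an additive constant, removable by rescaling $h$. So your ``main obstacle'' is mislocated: the issue is not fusing integrality with the sharp kernel, but absorbing the averaging error without damaging the kernel.
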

The second part of Theorem \ref{Cor1} with relation \eqref{M/C} is rather trivial, while the first part with inequality \eqref{eshe} is quite profound.
The following Corollary \ref{Coruniq}  significantly develops our results from \cite{Kha92}.

\begin{corollary}\label{Coruniq}
Let $0<\sigma\in \RR$, $0<\rho\in \RR$. If  $f$ is an entire function such that $f(0)\neq 0$ and 
\begin{equation}\label{eqCrs}
    \sup_{r\in \RR_+}\bigl(C(r;f)-\sigma r^{\rho}\bigr)<+\infty,
\end{equation}
then there exists an entire function $h\neq 0$ such that 
\begin{equation}\label{Mh}
    M(r;fh)\leq e^{P(\rho)\sigma r^{\rho}}\quad \text{for all $r\in \RR_+$}
\end{equation}
where, for any $\rho>0$,  it is impossible to decrease the Paley constant 
\begin{equation}\label{Paley}
    P(\rho):=\begin{cases}
    \pi \rho &  \text{if\/ $\rho\geq \frac12$},\\
    \dfrac{\pi\rho}{\sin \pi\rho}  &  \text{if\/ $0<\rho<\frac12$.}
\end{cases}
\end{equation} 
Therefore, in particular, if 
$$
 \limsup_{r\to +\infty}\frac{C(r;f)}{r^{\rho}}<\sigma,
$$
then there is an entire function $h\neq 0$ such that 
\begin{equation*}
  \limsup_{z\to \infty}\frac{\ln\bigl|f(z)h(z)\bigr|}{|z|^{\rho}}<P(\rho)\sigma.
\end{equation*}
\end{corollary}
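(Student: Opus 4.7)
The plan is to apply Theorem~\ref{Cor1} with a carefully chosen $p\geq 1$ and then bound the right-hand side of \eqref{eshe} by a constant multiple of $\sigma r^\rho$. Set $K:=\sup_{r\geq 0}\bigl(C(r;f)-\sigma r^\rho\bigr)$, which is finite by hypothesis \eqref{eqCrs}, so that $C(rt;f)\leq \sigma(rt)^\rho+K$ for all $r,t\geq 0$. Substituting this pointwise bound into \eqref{eshe} for the entire function $h_0\neq 0$ furnished by Theorem~\ref{Cor1} yields
\[
\ln M(r;fh_0)\;\leq\;\sigma r^\rho \int_0^{+\infty} \frac{p^2 t^{p+\rho-1}\dd t}{(1+t^p)^2}\;+\; Kp,
\]
since the substitution $u=t^p$ evaluates $\int_0^{+\infty} p^2 t^{p-1}(1+t^p)^{-2}\dd t$ to $p$.

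The same substitution, together with the Beta-integral identity $\int_0^{+\infty} u^{s-1}(1+u)^{-2}\dd u=(1-s)\pi/\sin\pi s$ valid for $0<s<2$, evaluates the remaining integral to $\pi\rho/\sin(\pi\rho/p)$, provided $0<\rho<p$ (exactly the condition ensuring convergence at $+\infty$). Hence $\ln M(r;fh_0)\leq \bigl[\pi\rho/\sin(\pi\rho/p)\bigr]\sigma r^\rho+Kp$, and replacing $h_0$ by $e^{-Kp}h_0$ absorbs the additive constant. It remains to minimize $\pi\rho/\sin(\pi\rho/p)$ over admissible $p$, namely $p\geq 1$ with $p>\rho$. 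For $\rho\geq 1/2$ the choice $p=2\rho$ is admissible and gives $\sin(\pi/2)=1$, so the minimum equals $\pi\rho$. For $0<\rho<1/2$ the admissible range is $p\in[1,+\infty)$; throughout this range $\pi\rho/p\in(0,\pi/2)$, a region on which $\sin$ is strictly increasing, so the minimum is attained at the left endpoint $p=1$ and equals $\pi\rho/\sin\pi\rho$. Both cases precisely reproduce the Paley constant $P(\rho)$ in \eqref{Paley}, yielding \eqref{Mh}. The concluding limsup assertion is then immediate: if $\limsup_{r\to+\infty} C(r;f)/r^\rho<\sigma$, fix $\sigma'$ strictly between this limsup and $\sigma$; then \eqref{eqCrs} holds with $\sigma'$ in place of $\sigma$, and the established inequality gives an entire $h\neq 0$ with $\ln|f(z)h(z)|\leq P(\rho)\sigma'|z|^\rho$, whence $\limsup|z|^{-\rho}\ln|f(z)h(z)|\leq P(\rho)\sigma'<P(\rho)\sigma$.

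The main obstacle I expect is the sharpness claim that $P(\rho)$ cannot be replaced by any smaller constant. This is a lower bound independent of Theorem~\ref{Cor1} and must be established by exhibiting extremal entire functions. For $0<\rho<1/2$, a canonical product $f$ with zeros $\{k^{1/\rho}\}_{k\in\NN}$ on the positive real axis realizes $C(r;f)\sim\sigma r^\rho$ while $\ln M(r;f)\sim(\pi\rho/\sin\pi\rho)\sigma r^\rho$ along the negative axis, matching Govorov's solution of the Paley problem; analogous Lindel\"of-type canonical products handle $\rho\geq 1/2$ with the constant $\pi\rho$. The verification that no entire multiplier $h\neq 0$ can push the maximum modulus below $P(\rho)\sigma r^\rho$ for such extremal $f$ then rests on the elementary inequality $C(r;fh)\geq C(r;f)+O(1)$ combined with the sharp form of the Govorov\,--\,Petrenko\,--\,Dahlberg\,--\,Ess\'en inequality applied to the product $fh$.
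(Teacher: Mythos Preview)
Your proof is correct and follows essentially the same route as the paper: apply Theorem~\ref{Cor1}, bound $C(rt;f)\leq\sigma(rt)^{\rho}+\text{const}$, evaluate the resulting integral as $\pi\rho/\sin(\pi\rho/p)$, and take $p=\max\{1,2\rho\}$ (which your explicit minimization recovers), then absorb the additive constant into $h$. For sharpness the paper names the reciprocal Gamma function $z\mapsto 1/(z\Gamma(z))$ for $\rho=1$ and the Mittag-Leffler functions $E_{\rho}(\cdot;1)$ for $\rho\neq 1$ rather than your canonical/Lindel\"of products, but it likewise omits the detailed verification.
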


\subsection{Main results on distribution of roots of entire functions}
We will call an arbitrary function ${\mathsf{Z}}\colon \CC\to \overline{\NN}_0$  the \textit{distribution of points on\/} $\CC$, and for each point~$z\in \CC$, the value ${\mathsf{Z}}(z)\in \overline{\NN}_0$ is the \textit{multiplicity of ${\mathsf{Z}}$ at $z$.}
 For every distribution of points ${\mathsf{Z}}$ on $\CC$, we define its \textit{radial counting function}
\begin{equation*}
    {\mathsf{Z}}^{\rad} \colon r \underset{r\in \RR_+}{\longmapsto}\sum_{|z|\leq r} {\mathsf{Z}}(z)\in \overline{\RR}_+ 
\end{equation*}
as well as its  \textit{radial integral counting function}
\begin{equation}\label{NZ}
N_{\mathsf{Z}}\colon r \underset{r\in \RR_+}{\longmapsto}\int_{0}^{r}\frac{{\mathsf{Z}}^{\rad}(t)-{\mathsf{Z}}^{\rad}(0)}{t}\dd t
+\mathsf{Z}^{\rad}(0)\ln r\in \overline{\RR}_+.
\end{equation}
Let  $f$ be an entire function. The distribution of points 
\begin{equation*}
	{\Zero}_f\colon  z\underset{z\in \mathbb{C}}{\longmapsto}\sup\biggl\{n\in  \overline{\mathbb{N}}_0\biggm| \limsup_{z\neq w\to z}\frac{|f(w)|}{|w-z|^n}<+\infty \biggr\}
\end{equation*}
will be called the \textit{distribution of roots\/} for this entire  function $f$. 
Thus, $	{\Zero}_f(z)$ is the \textit{multiplicity of root\/} of the function $f$ at the point $z\in \CC$. 
If $f=0$, then $\Zero_0=+\infty$, and  $f\neq 0$ if and only if 
${\Zero}_f^{\rad}(r)<+\infty$ for each $r\in \RR_+$, i.e., the radial counting function ${\Zero}_f^{\rad}$   is a finite function. The entire function $f$ \textit{vanishes on the distribution of points\/} $Z$ and \textit{we write\/} $f({\mathsf{Z}})=0$ if ${\Zero}_f\geq {\mathsf{Z}}$\/ on $\CC$. 

The following theorem estimates the smallest radial growth of entire functions vanishing on ${\mathsf{Z}}$ in terms of the radial integral counting function $N_{\mathsf{Z}}$.

\begin{theorem}\label{Th2}  Let  ${\mathsf{Z}}$ be a distribution of points on $\CC$
 such that  ${\mathsf{Z}}^{\rad}(r)<+\infty$ for each $r\in \RR_+$ and  ${\mathsf{Z}}(0)=0$. 
Then for each number $p\geq 1$ there is an entire function $f\neq 0$ such that 
$f(\mathsf{Z})=0$ and 
\begin{equation}\label{esheZ}
    M(r;f)\leq \exp 
 \int_0^{+\infty}
N_{\mathsf{Z}}( rt)\frac{p^2t^{p-1}\dd t}{(1+t^p)^2}\quad\text{for all $z\in \CC$},
\end{equation}
but there is no entire function $f\neq 0$ such that $f({\mathsf{Z}})=0$ and 
\begin{equation}\label{M/CZ}
    \liminf_{r\to +\infty}\frac{M(r,f)}{\exp N_{\mathsf{Z}}(r)}=0.
\end{equation}
\end{theorem}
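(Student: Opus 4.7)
My approach is to reduce Theorem~\ref{Th2} to Theorem~\ref{Cor1} via a Weierstrass product. Since $\mathsf{Z}^{\rad}(r)<+\infty$ for every $r$, the support of $\mathsf{Z}$ can accumulate only at infinity, so I would invoke Weierstrass's factorization theorem to produce an entire function $g$ with $\Zero_g=\mathsf{Z}$; choosing canonical primary factors I can arrange $g(0)=1$, which is consistent with the hypothesis $\mathsf{Z}(0)=0$. Then Jensen's formula, together with the definition \eqref{NZ} and $\mathsf{Z}^{\rad}(0)=0$, gives
\[
C(r;g)\;=\;\ln|g(0)|+\int_0^r\frac{\mathsf{Z}^{\rad}(t)}{t}\,\dd t\;=\;N_{\mathsf{Z}}(r)\qquad\text{for all } r>0.
\]

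Next I would apply Theorem~\ref{Cor1} to $g$ with the given exponent $p\geq 1$ (the hypothesis $g(0)\neq 0$ is met), producing an entire function $h\neq 0$ such that
\[
M(r;gh)\;\leq\;\exp\int_0^{+\infty}C(rt;g)\,\frac{p^2t^{p-1}\dd t}{(1+t^p)^2}\;=\;\exp\int_0^{+\infty}N_{\mathsf{Z}}(rt)\,\frac{p^2t^{p-1}\dd t}{(1+t^p)^2}.
\]
Setting $f:=gh$, the function $f$ is entire and nonzero, and $\Zero_{gh}=\Zero_g+\Zero_h\geq\mathsf{Z}$, so $f(\mathsf{Z})=0$ and inequality~\eqref{esheZ} follows immediately.

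For the impossibility assertion, let $f\neq 0$ be entire with $f(\mathsf{Z})=0$. Factoring out a possible zero at the origin as $f(z)=z^m\tilde{f}(z)$ with $\tilde{f}(0)\neq 0$, the generalized Jensen identity---whose clean form relies exactly on the $\mathsf{Z}^{\rad}(0)\ln r$ correction in~\eqref{NZ}---gives $C(r;f)=\ln|\tilde{f}(0)|+N_{\Zero_f}(r)$. Since $\Zero_f\geq\mathsf{Z}$ pointwise and $\mathsf{Z}^{\rad}(0)=0$, a direct comparison from \eqref{NZ} shows $N_{\Zero_f}(r)\geq N_{\mathsf{Z}}(r)$ for every $r\geq 1$. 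Combined with the elementary Jensen inequality $M(r;f)\geq\exp C(r;f)$, this yields $M(r;f)\geq|\tilde{f}(0)|\exp N_{\mathsf{Z}}(r)$ for all large $r$, hence $\liminf_{r\to+\infty}M(r;f)/\exp N_{\mathsf{Z}}(r)\geq|\tilde{f}(0)|>0$, contradicting \eqref{M/CZ}. The only real obstacle in the whole argument is this bookkeeping around the origin, which is minor given how definition~\eqref{NZ} is calibrated; the substantive content of the theorem is already contained in Theorem~\ref{Cor1}.
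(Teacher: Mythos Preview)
Your proof is correct and follows essentially the same route as the paper: take a Weierstrass product $g$ with $\Zero_g=\mathsf{Z}$ and $g(0)\neq 0$, use Jensen's formula to identify $C(r;g)$ with $N_{\mathsf{Z}}(r)$ up to an additive constant, and feed this into Theorem~\ref{Cor1}. Your normalization $g(0)=1$ is slightly cleaner than the paper's, which keeps a general $f_{\mathsf{Z}}(0)\neq 0$ and absorbs the resulting constant $p\ln|f_{\mathsf{Z}}(0)|$ at the end; and you supply an explicit argument for the impossibility part, which the paper dismisses as ``rather trivial'' and omits.
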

The second part of Theorem \ref{Th2} with relation \eqref{M/CZ} is rather trivial, while the first part with~\eqref{esheZ} is quite profound. The following Corollary \ref{CoruniqZ} contains our asymptotic results from \cite[Theorem 1]{Kha92}, significantly strengthens them and shows that  relations \eqref{esheZ}--\eqref{M/CZ} of Theorem~\ref{Cor1} are sharp.
\begin{corollary}\label{CoruniqZ}
Let $0<\sigma \in \RR$,  $0<\rho \in \RR$. If\/  ${\mathsf{Z}}$ is  a distribution of points such that  
\begin{equation*}
    \sup_{r\in \RR_+}\bigl(N_{\mathsf{Z}}(r)-\sigma r^{\rho}\bigr)<+\infty,
\end{equation*}
then there exists an entire function $f\neq 0$ such that $f(\mathsf{Z})=0$ and 
\begin{equation*}
    M(r,f)\leq e^{P(\rho)\sigma r^{\rho}}\quad \text{for all $r\in \RR_+$}
\end{equation*}
where, for any $\rho>0$,  it is impossible to decrease the Paley constant $P(\rho)$ defined in \eqref{Paley}. 

Therefore, in particular, if 
$$
 \limsup_{r\to +\infty}\frac{N_{\mathsf{Z}}(r)}{r^{\rho}}<\sigma,
$$
then there is an entire function $f\neq 0$ such that $f(\mathsf{Z})=0$ and 
\begin{equation*}
  \limsup_{z\to \infty}\frac{\ln\bigl|f(z)\bigr|}{|z|^{\rho}}<P(\rho)\sigma.
\end{equation*}

\end{corollary}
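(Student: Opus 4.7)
The plan is to derive Corollary \ref{CoruniqZ} from Theorem \ref{Th2} by specializing \eqref{esheZ} to the power-type hypothesis and optimizing the free parameter $p$; the sharpness of $P(\rho)$ will follow from a classical extremal construction related to the Paley problem.

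\textbf{Upper bound.} The hypothesis gives $N_{\mathsf{Z}}(r)\le \sigma r^{\rho}+C_0$ for some $C_0\ge 0$. Substituting into \eqref{esheZ} and splitting the integral yields
\[
\int_0^{+\infty}\! N_{\mathsf{Z}}(rt)\frac{p^2 t^{p-1}\dd t}{(1+t^p)^2}\;\le\;\sigma r^{\rho}\,I(\rho,p)+C_0\, p,
\]
where $\int_0^{+\infty} p^2 t^{p-1}(1+t^p)^{-2}\dd t=p$ (via $u=t^p$) and $I(\rho,p):=\int_0^{+\infty} p^2 t^{p+\rho-1}(1+t^p)^{-2}\dd t$. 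The same substitution together with the Beta-function identity $\int_0^{+\infty}u^{s-1}(1+u)^{-2}\dd u=(1-s)\pi/\sin\pi s$, applied with $s=\rho/p+1$, reduces $I(\rho,p)$ to $\pi\rho/\sin(\pi\rho/p)$, valid whenever $0<\rho<p$. I then minimize over admissible $p\ge 1$: for $\rho\ge\tfrac12$ I choose $p=2\rho$ so that $\sin(\pi\rho/p)=1$ and $I=\pi\rho$; for $0<\rho<\tfrac12$ the constraint $p\ge 1$ is binding and $p=1$ gives $I=\pi\rho/\sin\pi\rho$. Either way the optimum equals $P(\rho)$. Thus Theorem \ref{Th2} supplies $f_0\ne 0$ with $f_0(\mathsf{Z})=0$ and $M(r;f_0)\le\exp\bigl(P(\rho)\sigma r^{\rho}+C_0 p\bigr)$, and I absorb the additive constant $C_0 p$ by rescaling: $f:=e^{-C_0 p}f_0$ still vanishes on $\mathsf{Z}$ with the same multiplicities and now satisfies $M(r;f)\le e^{P(\rho)\sigma r^{\rho}}$ for every $r\in\RR_+$. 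The asymptotic consequence follows by applying the non-asymptotic bound with any $\sigma'$ strictly between $\limsup_{r\to+\infty}N_{\mathsf{Z}}(r)/r^{\rho}$ and $\sigma$; this choice makes $\sup_r\bigl(N_{\mathsf{Z}}(r)-\sigma' r^{\rho}\bigr)<+\infty$, and the resulting $f$ obeys $\limsup_{z\to\infty}|z|^{-\rho}\ln|f(z)|\le P(\rho)\sigma'<P(\rho)\sigma$.

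\textbf{Sharpness.} To rule out any smaller constant than $P(\rho)$ I would exhibit, for each $\rho>0$, a distribution $\mathsf{Z}^{*}$ concentrated on a single ray whose radial integral counting function $N_{\mathsf{Z}^{*}}$ is within $O(1)$ of $\sigma r^{\rho}$ on all of $\RR_+$. The Weierstrass canonical product of genus $\lfloor\rho\rfloor$ associated to $\mathsf{Z}^{*}$ has indicator exactly $P(\rho)\sigma$ on the opposite ray, and the classical sharp forms of the Govorov\,--\,Petrenko\,--\,Dahlberg\,--\,Ess\'en inequality (for $\rho\ge\tfrac12$), respectively the $\cos\pi\rho$-theorem applied on the antipodal ray (for $0<\rho<\tfrac12$), preclude any entire $f\ne 0$ vanishing on $\mathsf{Z}^{*}$ from having strictly smaller $\rho$-type than $P(\rho)\sigma$.

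\textbf{Main obstacle.} The hard part will be the sharpness half: producing a concrete discrete $\mathsf{Z}^{*}$ whose \emph{integral} counting function (not just the step function $\mathsf{Z}^{*,\rad}$) has \emph{uniformly} bounded deviation from $\sigma r^{\rho}$, and then confirming the exact Paley constant in the matching lower bound — the naive Jensen estimate $\log M(r;f)\ge N_{\mathsf{Z}}(r)-O(1)$ only delivers the constant $1$, so the full Paley-problem machinery (or an appropriate dual form of the paper's main theorem for subharmonic functions) is needed to recover $P(\rho)$.
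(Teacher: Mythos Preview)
Your upper-bound argument is correct and is exactly what the paper does: the paper omits the proof of Corollary~\ref{CoruniqZ} and refers to the proof of Corollary~\ref{Coruniq}, which proceeds precisely by inserting the bound $\sigma r^{\rho}+c$ into the integral \eqref{eshe}/\eqref{esheZ}, choosing $p=\max\{1,2\rho\}$ (i.e.\ your optimisation), evaluating the Euler integral to obtain $P(\rho)$, and absorbing the additive constant $cp$ by multiplying by $e^{-cp}$. Your Beta-integral computation and the split into the cases $\rho\ge\tfrac12$ and $\rho<\tfrac12$ match the paper line for line.

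For sharpness the paper takes a slightly different and more concrete route: rather than building an abstract one-ray distribution and invoking the full Paley machinery, it simply points (in the proof of Corollary~\ref{Coruniq}) to the extremal entire functions $z\mapsto 1/\bigl(z\Gamma(z)\bigr)$ for $\rho=1$ and the Mittag--Leffler function $E_{\rho}(\cdot;1)$ for $\rho\neq 1$, whose zero distributions furnish the required $\mathsf{Z}$ and for which the constant $P(\rho)$ is attained; detailed calculations are explicitly omitted there as well. Your plan is in the same spirit but less specific---the concrete functions save you the work of manufacturing a discrete $\mathsf{Z}^{*}$ with $N_{\mathsf{Z}^{*}}(r)=\sigma r^{\rho}+O(1)$ and of re-deriving the sharp lower bound, since these are classical facts about $\Gamma$ and $E_{\rho}$.
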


\subsection{Main results for meromorphic functions}

Let $F=\dfrac{f}{g}$ be a meromorphic function on $\CC$ represented as the ratio of  entire functions $f$ and $g$ such that  $f\neq 0$ or $g\neq 0$. 
Then  the \textit{distribution of poles ${\Pol}_F$ for\/} $F$ can be defined as 
\begin{equation}\label{dfPof}
    {\Pol}_F:=(\Zero_g-\Zero_f)^+\colon z\underset{z\in \CC}{\longmapsto} \bigl(\Zero_g(z)-\Zero_f(z)\bigr)^+\in \overline{\NN}_0.
\end{equation}
The classical Nevanlinna characteristic of $F$ is the function \cite{GO}
\begin{equation}\label{dfTf}
    T(r;F)\underset{r\in \RR_+}{:=} \frac{1}{2\pi}\int_0^{2\pi}\ln^+\bigl|F(re^{i\theta})\bigr|\dd \theta
+N_{\Pol_F}(r).
\end{equation}

\begin{theorem}\label{Th3} For any number $p\geq 1$, each meromorphic function $F$ on $\CC$ with $F(0)\neq 0,\infty$ can be represented as the ratio $F=\dfrac{f}{g}$ of entire  functions $f$ and $g$ such that 
\begin{equation}\label{eshem}
    \max\bigl\{M(r;f),M(r;g)\bigr\}\leq \exp  \int_0^{+\infty}
T( rt;F)\frac{p^2t^{p-1}\dd t}{(1+t^p)^2}\quad\text{for all $z\in \CC$},
\end{equation}
but there is no a pair of entire functions $f,g$ such that $F=\dfrac{f}{g}$  and 
\begin{equation}\label{M/Cm}
    \liminf_{r\to +\infty}\frac{\max\bigl\{M(r;f),M(r;g)\bigr\}}{\exp T(r,F)}=0.
\end{equation}
\end{theorem}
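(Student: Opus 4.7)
The plan is to mirror the strategy used for Theorems~\ref{Cor1} and~\ref{Th2}: reduce the meromorphic statement to the main theorem for subharmonic functions promised in the abstract, by producing a subharmonic function whose circular mean coincides exactly with $T(r; F)$.

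First, fix an initial representation $F = f_0/g_0$ in which $g_0$ is the Weierstrass canonical product with $\Zero_{g_0} = \Pol_F$ (well-defined since $F(0)\neq\infty$ forces $\Pol_F(0) = 0$), normalized by $g_0(0) = 1$. Then $f_0 := F g_0$ extends to an entire function with $f_0(0) = F(0)\neq 0$, and every other representation $F = f/g$ by entire functions arises as $f = f_0 h$, $g = g_0 h$ for some entire $h\neq 0$. The task therefore reduces to an appropriate choice of such $h$.

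Introduce the subharmonic function $u_0 := \ln\max\{|f_0|,|g_0|\}$; away from the poles of $F$ one has $u_0 = \ln|g_0| + \ln^+|F|$. Jensen's formula for $g_0$, with $g_0(0)=1$ and $\Zero_{g_0}=\Pol_F$, gives $C(r;g_0) = N_{\Pol_F}(r)$, so
\[
C(r; u_0) \;=\; C(r;g_0) + \frac{1}{2\pi}\int_0^{2\pi}\ln^+\bigl|F(re^{i\theta})\bigr|\,\dd\theta \;=\; N_{\Pol_F}(r) + \bigl(T(r;F) - N_{\Pol_F}(r)\bigr) \;=\; T(r;F).
\]
Moreover $\max_{|z|=r}(u_0 + \ln|h|) = \ln\max\{M(r; f_0 h), M(r; g_0 h)\}$ for every entire $h$, so applying the main theorem for subharmonic functions to $u_0$ and the given $p\geq 1$ produces an entire $h\neq 0$ with $\max_{|z|=r}(u_0+\ln|h|) \leq \int_0^{+\infty} C(rt;u_0)\frac{p^2 t^{p-1}}{(1+t^p)^2}\,\dd t$; combined with $C(r;u_0)=T(r;F)$ and the choice $f:=f_0 h$, $g:=g_0 h$, this is exactly \eqref{eshem}.

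The negative part \eqref{M/Cm} follows from the trivial observation that any representation $F=f/g$ forces $\Zero_g\geq\Pol_F$; the same decomposition then yields $C(r;\ln\max\{|f|,|g|\}) \geq T(r;F) + \ln|g(0)|$, and since the circular average is at most the circular maximum of a subharmonic function, $\ln\max\{M(r;f),M(r;g)\} \geq T(r;F) + \ln|g(0)|$ for every $r$, forcing $\max\{M(r;f),M(r;g)\}/\exp T(r;F) \geq |g(0)| > 0$. The real work is packaged into the main theorem for subharmonic functions, which, as the abstract indicates, rests on the Govorov--Petrenko--Dahlberg--Ess\'en inequality and on the authors' subharmonic minorant existence results; given that theorem, the meromorphic case reduces to the algebraic manipulation above, and the delicate point specific to Theorem~\ref{Th3} is the choice of $g_0$ with $\Zero_{g_0}=\Pol_F$ exactly, which is precisely what cancels all error terms in the identity $C(r;u_0)=T(r;F)$ and delivers the sharp constant in \eqref{eshem}.
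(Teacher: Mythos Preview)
Your proposal is essentially the paper's own argument. The paper likewise takes any representation $F=f_0/g_0$ by entire functions without common roots, sets $u_F:=\sup\{\ln|f_0|,\ln|g_0|\}$, invokes Cartan's identity $\mathsf{C}_{u_F}(r)=T(r;F)$ (which you rederive explicitly via Jensen after normalizing $g_0(0)=1$), applies the Main Theorem to $u_F$ to obtain $h\neq 0$, and puts $f:=f_0h$, $g:=g_0h$. For the impossibility of \eqref{M/Cm} the paper also writes $f=f_0h$, $g=g_0h$ and shows $\liminf_{r\to\infty}\mathsf{C}_{\ln|h|}(r)=-\infty$, forcing $h=0$; your version is the same computation read as a uniform lower bound.

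One small gap in your negative part: you conclude $\max\{M(r;f),M(r;g)\}/\exp T(r;F)\geq |g(0)|>0$, but nothing rules out $g(0)=0$ (the hypothesis $F(0)\neq 0,\infty$ only forces $f$ and $g$ to vanish to the \emph{same} order at $0$). The fix is immediate: Jensen's formula for $g$ with a zero of order $m$ at $0$ still gives $\mathsf{C}_{\ln|g|}(r)=N_{\Zero_g}(r)+\ln|a_m|$ with $a_m\neq 0$ the leading coefficient, and since $\Zero_g\geq\Pol_F$ implies $N_{\Zero_g}\geq N_{\Pol_F}$, your lower bound holds with the finite constant $\ln|a_m|$ in place of $\ln|g(0)|$. (The paper sidesteps this by factoring $f=f_0h$, $g=g_0h$ and arguing that $\mathsf{C}_{\ln|h|}$ is increasing for $h\neq 0$.)
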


The following Corollary \ref{CoruniqF} contains our asymptotic results from \cite[Theorem 2]{Kha92}. 
\begin{corollary}\label{CoruniqF}
Let $0<\sigma, \rho  \in \RR$. If   $F$ is a meromorphic function on $\CC$ with $F(0)\neq 0,\infty$ such that  
\begin{equation*}
    \sup_{r\geq 0}\bigl(T(r;F)-\sigma r^{\rho}\bigr)<+\infty,
\end{equation*}
then there exists a pair of entire functions  $f\neq 0$ and $g\neq 0$ such that $F=\dfrac{f}{g}$ and 
\begin{equation*}
\max\bigl\{M(r;f),M(r;g)\bigr\}\leq  e^{P(\rho)\sigma r^{\rho}}\quad \text{for all $r\in \RR_+$}
\end{equation*}
where, for any $\rho>0$,  it is impossible to decrease the Paley constant $P(\rho)$ defined in \eqref{Paley}. 

Therefore, in particular, if 
$$
 \limsup_{r\to +\infty}\frac{T(r;F)}{r^{\rho}}<\sigma,
$$
then there is a pair of entire functions  $f\neq 0$ and $g\neq 0$ representing  $F=\dfrac{f}{g}$ such that 
\begin{equation*}
  \limsup_{z\to \infty}\frac{\ln\bigl|f(z)\bigr|}{|z|^{\rho}}<P(\rho)\sigma 
\quad\text{and}\quad \limsup_{z\to \infty}\frac{\ln\bigl|g(z)\bigr|}{|z|^{\rho}}<P(\rho)\sigma.
\end{equation*}
\end{corollary}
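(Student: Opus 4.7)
The plan is to substitute the hypothesized growth bound for $T(r;F)$ into the integral estimate \eqref{eshem} of Theorem \ref{Th3}, optimise the free parameter $p\geq 1$, and absorb the remaining additive constant by rescaling. Fix $C\in\RR$ with $T(r;F)\leq \sigma r^\rho+C$ for all $r\geq 0$. For each admissible $p$, Theorem \ref{Th3} supplies entire $f_0,g_0\neq 0$ with $F=f_0/g_0$ and, after inserting the pointwise bound,
\[
\max\{M(r;f_0),M(r;g_0)\}\leq \exp\bigl(\sigma r^\rho\, I(p,\rho)+pC\bigr),
\]
where $I(p,\rho):=\int_0^{+\infty} t^\rho\, p^2 t^{p-1}(1+t^p)^{-2}\dd t$ and the constant term $pC$ arises from the total mass $\int_0^{+\infty}\frac{p^2 t^{p-1}}{(1+t^p)^2}\dd t=p$, which is obvious from the antiderivative $-p/(1+t^p)$. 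The task thus reduces to minimising $I(p,\rho)$ over $p\geq 1$.

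A classical Beta/reflection calculation yields this minimum in closed form. The substitution $u=t^p$ turns $I(p,\rho)$ into $p\int_0^{+\infty} u^{\rho/p}(1+u)^{-2}\dd u=p\,B(1+\rho/p,1-\rho/p)=\pi\rho/\sin(\pi\rho/p)$, valid for every $p>\rho$ (by Euler's reflection formula $\Gamma(\alpha)\Gamma(1-\alpha)=\pi/\sin(\pi\alpha)$). To minimise, I push $\pi\rho/p$ as close to $\pi/2$ as possible while respecting $p\geq 1$: for $\rho\geq\tfrac12$ the choice $p=2\rho\geq 1$ is admissible and yields $I=\pi\rho$; for $0<\rho<\tfrac12$ the constraint $p\geq 1$ forces $\pi\rho/p\leq\pi\rho<\pi/2$, and the optimum is the endpoint $p=1$, giving $I=\pi\rho/\sin(\pi\rho)$. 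In both regimes the minimum equals precisely the Paley constant $P(\rho)$ of \eqref{Paley}.

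With this optimal $p$ the bound reads $\max\{M(r;f_0),M(r;g_0)\}\leq e^{pC}e^{P(\rho)\sigma r^\rho}$, and the $r$-independent factor $e^{pC}$ is removed by passing from $(f_0,g_0)$ to $(e^{-pC}f_0,e^{-pC}g_0)$, which preserves the ratio $F$. The asymptotic clause then follows by picking any $\sigma'\in\bigl(\limsup_{r\to\infty} T(r;F)/r^\rho,\sigma\bigr)$, for which $\sup_{r\geq 0}\bigl(T(r;F)-\sigma' r^\rho\bigr)<+\infty$ by continuity of $T$ on compact intervals; the first part applied with $\sigma'$ in place of $\sigma$, together with $\ln|f(z)|\leq \ln M(|z|;f)$, yields the strict $\limsup$ inequality. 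The one step not furnished by Theorem \ref{Th3} or the calculations above is the sharpness of $P(\rho)$, and this I expect to be the main obstacle: I would reduce to the entire case $g\equiv 1$ (so that $F=f$ is entire and $T(r;F)$ is its Nevanlinna proximity function) and invoke the classical Paley-type extremal constructions underlying the Govorov\,--\,Petrenko\,--\,Dahlberg\,--\,Ess\'en inequality cited in the abstract, for which no factorization can lower the Paley constant.
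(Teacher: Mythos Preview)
Your proposal is correct and matches the paper's intended argument: the paper omits the proof of Corollary~\ref{CoruniqF} entirely, stating only that it is ``constructed similarly to the proof of Corollary~\ref{Coruniq},'' and that proof proceeds exactly as you do---substitute the bound $T(r;F)\leq\sigma r^{\rho}+c$ into \eqref{eshem}, evaluate the Euler integral to obtain $\pi\rho/\sin(\pi\rho/p)$, take $p=\max\{1,2\rho\}$ so that the integral equals $P(\rho)$, and absorb the additive constant by a scalar factor. Your discussion of why $p=\max\{1,2\rho\}$ is optimal and your handling of the sharpness (reducing to the entire case and invoking the extremal examples, which in the paper are $1/(z\Gamma(z))$ for $\rho=1$ and the Mittag--Leffler function $E_\rho(\,\cdot\,;1)$ otherwise) are more explicit than the paper's own remarks but follow the same line.
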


\subsection{Main result  for subharmonic functions}

We denote by 
\begin{equation*}
    \DD:=\bigl\{z\in \CC\bigm| |z|<1\bigr\},\quad   \overline{\DD}:=\bigl\{z\in \CC\bigm| |z|\leq 1\bigr\},  \quad \partial  \DD=\partial  \overline{\DD}:=\overline{\DD}\setminus \DD 
\end{equation*} 
the \textit{open unit disk,\/} the \textit{closed unit disk,\/} the \textit{unit circle,\/} respectively. 

For $d\in \RR_+$, we denote by $\mathfrak{m}_d$ the $d$-dimensional Hausdorff measure in $\CC$ \cite{EG},  \cite[2.4]{Kha22AA}, \cite[\S~5]{Kha22MS}. 
Thus,  $\mathfrak{m}_2$ is the planar Lebesgue measure \cite{EG}, the restriction of $\mathfrak{m}_1$ on a Lipschitz arc or closed curve is a measure of its length \cite{EG}, \cite[\S~5]{Kha22MS},  and $\mathfrak{m}_0(S)=\sum\limits_{s\in S}1$ is equal to the number of points in $S$. 

Let  $u\neq -\infty$ be a subharmonic function on  $\CC$. For $r\in \RR_+$, we define \cite[Definition 2.6.7]{Rans},  \cite{Kha22AA},  \cite{Kha22MS}   
\begin{subequations}\label{chu}
\begin{align}
   {\mathsf{M}}_u(r)&\underset{r>0}{:=}\sup_{r\overline{\DD}}u=\sup_{r\partial\overline{\DD}}u, &{\mathsf{M}}_u(0):=u(0),
\tag{\ref{chu}M}\label{uM}
\\
\intertext{the \textit{integral mean of $u$ over the circle\/} $r\partial \overline{\DD}$ of radius $r$ centered at $0$}
\mathsf{C}_u(r)&\underset{r>0}{:=}\frac{1}{2\pi r}\int_{r\partial\overline{\DD}}u\dd\mathfrak{m}_1=\frac{1}{2\pi}\int_{0}^{2\pi} u(re^{i\theta})\dd \theta, &{\mathsf{C}}_u(0):=u(0),
\tag{\ref{chu}C}\label{uC}
\\
\intertext{the \textit{integral mean of $u$ over the disk\/} $r\overline{\DD}$ of radius $r$ centered at $0$}
\mathsf{B}_u(r)&\underset{r>0}{:=}\frac{1}{\pi r^2}\int_{r\overline{\DD}}u\dd \mathfrak{m}_2=
 \frac{2}{r^2}\int_{0}^{r} \mathsf{C}_u(t)t\dd t, &{\mathsf{B}}_u(0):=u(0),
\tag{\ref{chu}B}\label{uB}
\\
\intertext{and the \textit{\it Nevanlinna  characteristic\/}} 
\mathsf{T}_u(r)&:=\mathsf{C}_{u^+}=\frac{1}{2\pi}\int_{0}^{2\pi}u^+(re^{i\theta})\dd \theta.
\tag{\ref{chu}T}\label{uT}
\end{align}
\end{subequations}

For each subharmonic function on $\CC$ we have inequalities \cite[Theorem 2.6.8]{Rans}, \cite{Kha22AA},  \cite{Kha22MS}   
\begin{equation}\label{0BC}
    u(0)\underset{r\in \RR_+}{\leq} \mathsf{B}_u(r)\underset{r\in \RR_+}{\leq} \mathsf{C}_u(r)\underset{r\in \RR_+}{\leq}
\begin{cases}
 {\mathsf{M}}_{u}(r)\underset{0<r\to 0}{\longrightarrow} u(0).  \\
 \mathsf{C}_{u}^+(r)\underset{r\in \RR_+}{\leq} \mathsf{T}_u(r)
\underset{r\in \RR_+}{\leq}  \mathsf{M}_u^+(r).
\end{cases}
\end{equation}
All these functions-characteristics in \eqref{0BC} are increasing continuous convex of $\ln$, 
i.e., their superposition with the exponential function $\exp$ are convex on $\RR$, and   their superposition with  the modulus  $z\underset{z\in \CC}{\longmapsto}|z|$ are subharmonic functions  on $\CC$.
If $f$ is an entire function, then $\ln|f|$ is subharmonic  and, by definitions \eqref{dfMf}--\eqref{dfCf} and 
\eqref{dfPof}--\eqref{dfTf}, 
\begin{equation}\label{MCTu}
    M(r;f)\overset{\eqref{dfMf},\eqref{uM}}{\underset{r\in \RR_+}{=}}\exp \mathsf{M}_{\ln|f|}(r),\qquad 
C(r;f)\overset{\eqref{dfCf},\eqref{uC}}{\underset{r\in \RR_+}{=}}\mathsf{C}_{\ln|f|}(r), \qquad
T(r;f)
\overset{\eqref{dfTf},\eqref{uT}}{\underset{r\in \RR_+}{=}}
\mathsf{T}_{\ln|f|}(r).
\end{equation}

Our  main result is the following 
\begin{maintheorem}\label{Th1} Let $u$ be a subharmonic function on $\CC$ that is harmonic in some neighborhood of the origin. Then for any number  $p\geq 1$ there is an entire function $h\neq 0$ such that
\begin{equation}\label{esh0}
    u(z)+\ln|h(z)|\leq p^2 \int_0^{+\infty}
\mathsf{C}_u\bigl(|z|t\bigr)\frac{t^{p-1}\dd t}{(1+t^p)^2}
\quad\text{for all $z\in \CC$,}
\end{equation}
where the integral on the right side is finite for all $z\in \CC$ if and only if
\begin{equation}\label{iT}
    \int_1^{+\infty}\frac{\mathsf{C}_u(t)}{t^{p+1}}\dd t< +\infty.
\end{equation}
\end{maintheorem} 
  


\section{Preliminary results}

\subsection{Representations of subharmonic functions}

\begin{lemma}\label{Lemr} Let $u\neq -\infty$ be a subharmonic function on $\CC$ satisfying \eqref{iT} for some number $p>0$. Then there are 
a subharmonic  function $U$ on $\CC$ of zero type over the order $p$, i.e., 
\begin{equation}\label{M0U}
\type_p [u]:= \limsup_{z\to \infty}\frac{\mathsf{M}_u(|z|)}{|z|^p}=0, 
\end{equation}
and an entire function $F$  that does not vanish, such that $u=U+\ln|F|$.
\end{lemma}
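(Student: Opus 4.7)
\textbf{Proof plan for Lemma \ref{Lemr}.}
The strategy is to split off an explicit subharmonic potential built from the Riesz measure of $u$, tuned to have zero type over order $p$, and then to recognize what is left over as the logarithmic modulus of a nonvanishing entire function.

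First I would pass to the Riesz measure $\mu:=\frac{1}{2\pi}\Delta u\geq 0$ and its radial counting function $n(r):=\mu(r\overline{\DD})$. The Jensen-type identity $\mathsf{C}_u(r)=u(0)+\int_0^r n(t)/t\,\dd t$ (valid for every subharmonic $u\not\equiv -\infty$) converts the hypothesis \eqref{iT} into quantitative information on $n$. After the substitution $t=e^s$, the function $s\mapsto \mathsf{C}_u(e^s)$ is increasing and convex, so any value of it is dominated by a unit-length average to its right; combined with $\int_0^\infty \mathsf{C}_u(e^s)e^{-ps}\,\dd s<+\infty$, this yields $\mathsf{C}_u(r)=o(r^p)$. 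Using $n(r)\leq \mathsf{C}_u(er)-\mathsf{C}_u(r)$ (monotonicity of $n$) I then get $n(r)=o(r^p)$ and, by Fubini, $\int_1^{+\infty} n(t)/t^{p+1}\,\dd t<+\infty$.

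Next I would fix an integer $q$ with $q\leq p<q+1$ and form the Weierstrass primary kernel
\begin{equation*}
k_q(z,w):=\ln\bigl|1-z/w\bigr|+\sum_{j=1}^{q}\frac{1}{j}\Real (z/w)^{j},\qquad w\neq 0,
\end{equation*}
handling an atom at the origin separately by an $\ln|z|$-term. The candidate is
\begin{equation*}
U(z):=\int_{\CC} k_q(z,w)\,\dd\mu(w).
\end{equation*}
Using the classical bounds $|k_q(z,w)|\leq A_q|z/w|^{q+1}$ when $|w|\geq 2|z|$, and a crude logarithmic bound plus $|z/w|^q$-term for $|w|<2|z|$, one verifies that $U$ is subharmonic on $\CC$ with Riesz measure $\mu$. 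Splitting $U(z)$ at $|w|=|z|$ and integrating by parts against $n$, the two pieces are controlled by
\begin{equation*}
|z|^{q+1}\int_{|z|}^{+\infty}\frac{n(t)}{t^{q+2}}\,\dd t\quad\text{and}\quad |z|^q\int_0^{|z|}\frac{n(t)}{t^{q+1}}\,\dd t,
\end{equation*}
up to harmless harmonic polynomial terms. Feeding in $n(t)=o(t^p)$ and $\int n(t)/t^{p+1}\,\dd t<+\infty$ obtained above, both pieces are shown to be $o(r^p)$ uniformly on $|z|=r$, i.e.\ $\type_p[U]=0$.

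Finally, $u$ and $U$ share the Riesz measure $\mu$, so $h:=u-U$ is harmonic on all of $\CC$. Every such $h$ is the real part of some entire function $g$; setting $F:=e^g$ gives a nowhere-vanishing entire function with $\ln|F|=h$, hence $u=U+\ln|F|$ as required. The real work, and the point I expect to be the main obstacle, is the growth estimate $\mathsf{M}_U(r)=o(r^p)$: the genus $q$ must be matched precisely to $p$ so that both tail integrals above are absorbed by the integrability condition \eqref{iT}. The integer-$p$ case is the delicate one, since then $q=p$ is borderline and one must exploit $n(r)=o(r^p)$ (rather than mere integrability) to kill the boundary contribution; the non-integer case is easier, as one has genuine room between $q+1$ and $p$ on both sides.
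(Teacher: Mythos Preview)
Your plan is essentially the paper's own argument: pass to the Riesz measure, build a Weierstrass--Hadamard--Brelot potential of suitable genus, bound its growth by the two classical tail integrals, and write the harmonic remainder as $\ln|F|$ via Weyl's lemma and $\Real g=\ln|e^{g}|$. The paper works with the integrated counting function $N_{\varDelta_u}(1,\cdot)$ rather than your $n$, but after an integration by parts the estimates are the same.

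The one slip is your genus at integer $p$. Your condition $q\leq p<q+1$ forces $q=p$ when $p\in\NN$, and then your near-field piece $|z|^{p}\int_0^{|z|}n(t)\,t^{-p-1}\dd t$ tends to $C|z|^{p}$ with $C=\int_0^{\infty}n(t)\,t^{-p-1}\dd t<\infty$, so it is only $O(r^{p})$, not $o(r^{p})$. No appeal to $n(r)=o(r^{p})$ rescues this: the obstruction is structural, namely the degree-$p$ harmonic polynomial $p^{-1}\Real\bigl(z^{p}\int_{\CC} w^{-p}\dd\mu(w)\bigr)$ that sits inside your $U$ when $q=p$. The paper sidesteps the issue by taking genus $\lceil p\rceil-1$, which equals $p-1$ for integer $p$; then the far-field term $r^{p}\int_r^{\infty}n(t)\,t^{-p-1}\dd t$ is handled directly by the integrability hypothesis, and the near-field term $r^{p-1}\int_0^{r}n(t)\,t^{-p}\dd t$ is $o(r^p)$ by $n(t)=o(t^{p})$. (Alternatively you could keep $q=p$ and absorb that harmonic polynomial into $F$, but then you must say so rather than claim $\type_p[U]=0$ outright.) Your instinct that the integer case is the delicate one is correct; the cure is to \emph{lower} the genus, not to push harder on a boundary term.
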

\begin{proof}
It is follows from  \eqref{iT} that 
\begin{equation}\label{C0}
    \mathsf{C}_u(r)\underset{r>1}{\leq}pr^p\int_r^{+\infty}\frac{\mathsf{C}_u(t)}{t^{p+1}}\dd t=o(r^p) \quad\text{as $r\to +\infty$.}
\end{equation}
The {\it  Riesz distribution of masses\/} for  $u$ is a positive Borel measure 
\begin{equation*}
\varDelta_u:= \frac{1}{2\pi} {\bigtriangleup}{u},
\end{equation*}
where $\bigtriangleup$ is  the {\it Laplace operator\/}  acting in the sense of the
theory of distributions or generalized functions. We denote by 
\begin{equation*}
    \varDelta_u^{\rad}\colon t\underset{t\in \RR_+}{\longmapsto} \varDelta_u\bigl(t\overline{\DD}\bigr)
\end{equation*}
the \textit{radial counting function of\/} $\varDelta_u$.
By  the Poisson\,--\,Jensen\,--\,Nevanlinna\,--\,Privalov formula for subharmonic functions
\cite[4.5]{Rans}, \cite[Ch.~2,\S~2]{Priv37}, \cite[\S~3,  (3.2)]{Kha22AA} we have
\begin{equation*}
   N_{\varDelta_u}(r,R):=\int_r^R\frac{ \varDelta_u^{\rad}(t)}{t}\dd t
= \mathsf{C}_u(R)-\mathsf{C}_u(r) \quad\text{for all $0<r<R<+\infty$.}
\end{equation*}
Hence, by \eqref{iT} and \eqref{C0}, we obtain
\begin{align}
    \int_1^{+\infty}\frac{N_{\varDelta_u}(1,t)}{t^{p+1}}\dd t<+\infty,
\label{Ni}
\\
\limsup_{r\to +\infty}\frac{N_{\varDelta_u}(1,r)}{r^p}=0.
\label{Np}
\end{align}
We denote the \textit{integer part\/} of the number $p$ as 
\begin{equation*}
    \lfloor p \rfloor:=:
\text{\sf floor} (p):=  \sup\{k\in {\mathbb{Z}}\colon  k\leq p\} \in \mathbb{Z},\quad \lfloor p \rfloor\leq p ,
\label{{fc}f}
\end{equation*}
and the \textit{smallest integer greater than}  $p$ as
\begin{equation*}
\lceil p \rceil :=:\text{\sf ceil} (p):=  \inf\{k\in {\mathbb{Z}} \colon  k\geq p\} 
=\begin{cases}
    p  &  \text{if $p\in \ZZ$} \\
     \lfloor p \rfloor+1 &  \text{if $p\notin \ZZ$} 
\end{cases}\quad \in \mathbb{Z}, \quad \lceil p \rceil\geq p\geq  \lfloor p \rfloor .
\label{{fc}c}
\end{equation*}

By the Weierstrass\,--\,Hadamard\,--\,Brelot Theorem on representation of subharmonic function \cite[4.2]{HK} the following function
\begin{equation*}
    U\colon z\underset{z\in \CC}{\longmapsto}
\int_{\DD}\ln|w-z|\dd \varDelta_u(w)+\int_{\CC\setminus \DD}
\underset{K_{\lceil p \rceil -1}(z,w)}{\underbrace{\biggl(\ln\Bigl|1-\frac{z}{w}\Bigr|+ \sum_{j=0}^{\lceil p \rceil -1}\Real\frac{z^j}{w^j}\biggr)}}\dd\varDelta_u(w)
\end{equation*}
defines a possible subharmonic function with Riesz distribution of masses $\varDelta_u$ and $\type_p[U]=0$, that   
follows from the following upper  estimate of second integral  \cite[Lemma 4.4, (4.2.4)]{HK}: 
\begin{equation*}
\int_{\CC\setminus \overline{\DD}}
K_{\lceil p \rceil -1}(z,w)
\dd\varDelta_u(w)\underset{r:=|z|\geq 1}{\leq} C_p\biggl( r^{\lceil p \rceil -1}\int_1^r\frac{N_{\varDelta_u}(1,t)}{t^{\lceil p \rceil}}\dd t 
+r^{\lceil p \rceil}\int_r^{+\infty}\frac{N_{\varDelta_u}(1,t)}{t^{\lceil p \rceil+1}}\dd t \biggr),
\end{equation*}
where the constant $C_p\in\RR_+$ depends only on $p>0$, and 
\begin{gather*}
    r^{\lceil p \rceil -1}\int_1^r\frac{N_{\varDelta_u}(1,t)}{t^{\lceil p \rceil}}\dd t 
\underset{r,t\to +\infty}{\overset{\eqref{Np}}{=}}r^{\lceil p \rceil -1}
\int_1^r\frac{o(t^p)}{t^{\lceil p \rceil }}\dd t
\underset{r\to +\infty}{=}r^{\lceil p \rceil -1}\cdot o\bigl(r^{p+1-\lceil p \rceil}\bigr)
\underset{r\to +\infty}{=}o(r^p), \\
  r^{\lceil p \rceil }\int_r^{+\infty}\frac{N_{\varDelta_u}(1,t)}{t^{\lceil p \rceil+1}}\dd t 
=
  r^{\lceil p \rceil }\int_r^{+\infty}\frac{1}{t^{\lceil p \rceil -p}}\cdot \frac{N_{\varDelta_u}(1,t)}{t^{p+1}}\dd t 
\underset{r\geq 1}{\leq} r^p
\int_r^{+\infty}\frac{N_{\varDelta_u}(1,t)}{t^{p+1}}\dd t 
\overset{\eqref{Ni}}{\underset{r\to +\infty}{=}}o(r^p). 
\end{gather*}
Besides, $\varDelta_U=\varDelta_u$, and, by Weyl's lemma \cite[Lemma 3.7.10]{Rans}, there is a harmonic function $H$
on $\CC$ such that  $u=U+H$. 
By \cite[Theorem 1.1.2(b)]{Rans} there is an entire function $F\neq 0$ without zeros such that $H=\ln|F|$ and 
$u=U+\ln |F|$. 
\end{proof}

\subsection{Variation of the Govorov\,--\,Petrenko\,--\,Dahlberg\,--\,Ess\'en inequality}

In \cite{Govorov}, 1964,  N.V. Govorov used one very useful sharp inequality for entire functions to solve the classical Paley problem of 1932 (see also \cite[Appendix]{GO}, \cite{Govorovbook}). In \cite{Petrenko}, 1969, V. P. Petrenko extended this inequality to meromorphic functions. In \cite{Dahlberg}, 1972, B. Dahlberg extended this inequality to subharmonic functions in the space of three or more variables. In \cite{Essen}, 1975, M. Ess\'en adapted the latter inequality for subharmonic functions on the complex plane.
We will formulate this inequality as close as possible to the original formulations in \cite{Dahlberg} and \cite{Essen}.

\begin{lemma}[{\cite[Lemma 3.4]{Dahlberg}, \cite[Lemma 4.1]{Essen}}]\label{LemEssen} Let $u\geq 0$ be a two times continuously differentiable subharmonic function on $\CC$. 
Suppose that the function $u$ is harmonic in some neighborhood of the origin, and $\frac12<\lambda\in \RR$. 
Then there is a number $C>0$, only depending on $\lambda$, such that 
\begin{equation*}
    u(r)\leq V(u,r,R)+C\mathsf{M}_u(6R)(r/R)^{2\lambda}\quad\text{for all $0<r<R/2$,}
\end{equation*} 
where 
\begin{align*}
 V(u,r,R)&:=-(2\pi)^{-1}\iint_{K_R} u(z)\Delta \Psi(r,z)\dd \mathfrak{m}_2(z)\\
K_R&:=\Bigl\{\rho e^{i\theta}\in \CC\Bigm| 0<\rho<R, -\frac{\pi}{2\lambda}<\theta<\frac{\pi}{2\lambda}\Bigr\}\subset \CC\\
\Psi(r,w)&:=\Psi\bigl(r,|w|\bigr)=\Psi(r,\rho):=-\ln \biggl(1+\Bigl(\frac{\rho}{r}\Bigr)^{2\lambda}\biggr) \text{ for $w:=\rho e^{i\theta}$},\\
-\rho {\bigtriangleup} \Psi(r,w)&=4\lambda^2\rho^{-1}\bigl((r/\rho)^{\lambda}+(\rho/r)^{\lambda}\bigr)^{-2}.
\end{align*}
\end{lemma}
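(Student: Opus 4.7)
The strategy is to exploit the conformal equivalence $\zeta = z^\lambda$ between the infinite sector $K_\infty := \bigcup_R K_R$ and the right half-plane, reducing the problem to a Riesz representation on the half-disk
\begin{equation*}
D^+ := \{\zeta \in \CC \mid |\zeta| < R^\lambda,\ \Real \zeta > 0\}.
\end{equation*}
The map $z \mapsto z^\lambda$ sends $K_R$ biholomorphically onto $D^+$, the two radial edges of $K_R$ onto the imaginary-axis diameter of $D^+$, and the point $r\in(0,R/2)$ onto $r^\lambda$. In $\zeta$-coordinates, the kernel is $-\ln(1+|\zeta|^2/r^{2\lambda})$, symmetric under $\zeta\mapsto\bar\zeta$; equivalently, $\Psi(r,\cdot)$ depends only on $|w|$ in the $z$-picture, so its gradient is radial and its normal derivative vanishes identically on the two radial edges of $K_R$.

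Apply Green's second identity to $(u,\Psi)$ on $K_R$, valid because $u$ is $C^2$ and $\Psi$ is smooth on $\overline{K_R}$ with $\Psi(r,0)=0$. Using $\partial_n\Psi=0$ on the radial edges reduces the boundary integral to the arc $A_R := \{|w|=R,\ |\arg w|<\pi/(2\lambda)\}$:
\begin{equation*}
V(u,r,R) = \frac{1}{2\pi}\iint_{K_R}(-\Psi)\bigtriangleup u \,\dd\mathfrak{m}_2 + \frac{1}{2\pi}\int_{A_R}(\Psi\,\partial_n u - u\,\partial_n\Psi)\,\dd\mathfrak{m}_1,
\end{equation*}
and the first term is nonnegative since $-\Psi\geq 0$ and $\bigtriangleup u\geq 0$. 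The inequality $u(r)\leq V+C\mathsf{M}_u(6R)(r/R)^{2\lambda}$ is produced by recognizing the right-hand side as the smoothed sub-Poisson representation of $u(r)$ on the half-disk. Concretely, the Riesz representation on $D^+$ at $r^\lambda$ yields $\tilde u(r^\lambda)\leq \int_{\partial D^+}\tilde u\,d\omega_{r^\lambda}$ for the pull-back $\tilde u(\zeta):=u(\zeta^{1/\lambda})$; by design of $\Psi$, the diameter part of this boundary integral (images of the radial edges of $K_R$) is identified with $V(u,r,R)$ via the integration-by-parts identity above, leaving only the arc contribution as error.

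The error term arises from the arc of $D^+$: the Poisson kernel of the half-disk at the interior point $r^\lambda$, evaluated on the arc $\{|\zeta|=R^\lambda,\ \Real\zeta>0\}$, decays like $(r^\lambda/R^\lambda)^2=(r/R)^{2\lambda}$, as one checks via Schwarz reflection of the disk Poisson kernel across the imaginary axis. Bounded above by $\mathsf{M}_u(R)\leq\mathsf{M}_u(6R)$ on the arc, this yields the claimed $(r/R)^{2\lambda}$ decay; the enlargement to $6R$ accommodates a classical gradient-estimate factor needed to bound $\partial_n u$ in the $\Psi\,\partial_n u$ term on the arc, via subharmonic mean-value estimates on a slightly larger disk containing the arc in its interior.

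The main obstacle is the clean identification of $\Psi$ as a modified Green's potential of harmonic measure on the diameter of $D^+$ from $r^\lambda$, so that the decomposition "arc term plus interior $V$" closes with a universal constant depending only on $\lambda$. This goes through an explicit computation with the half-disk Poisson kernel and hinges on the algebraic form $-\ln(1+|\zeta|^2/r^{2\lambda})$ of the transported kernel; a secondary technicality is controlling the regularity of $\tilde u$ at $\zeta=0$, which is ensured by the hypothesis that $u$ is harmonic near the origin. The threshold $\lambda>1/2$ enters exactly as the condition that the sector opening $\pi/\lambda$ is less than $2\pi$, permitting the conformal embedding into the half-plane and explaining why the method and the exponent $2\lambda$ are sharp at this endpoint.
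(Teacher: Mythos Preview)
The paper does not give its own proof of this lemma: it is quoted verbatim from Dahlberg and Ess\'en and used as a black box, so there is no in-paper argument to compare against. Your sketch therefore has to stand on its own.

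The overall architecture---conformal map $z\mapsto z^{\lambda}$ to the half-disk, majorize the subharmonic $u$ at $r$ by a harmonic-measure integral, separate the diameter from the arc, and read off $(r/R)^{2\lambda}$ decay on the arc---is indeed the framework behind the Dahlberg--Ess\'en proof. But two steps in your write-up do not close.

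First, the Green's identity you display is missing a term. You correctly note that $\partial_n\Psi=0$ on the two radial edges (since $\Psi$ depends only on $|w|$), but that only kills the $u\,\partial_n\Psi$ contribution there; the term $\Psi\,\partial_n u$ survives on the radial edges, because $\Psi(r,\rho)=-\ln\bigl(1+(\rho/r)^{2\lambda}\bigr)$ is nonzero for $\rho>0$. So your displayed formula for $V(u,r,R)$ should carry an additional $\frac{1}{2\pi}\int_{\text{radial edges}}\Psi\,\partial_n u\,\dd\mathfrak{m}_1$, which has no definite sign for a general $C^2$ subharmonic $u$.

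Second, and more fundamentally, the sentence ``by design of $\Psi$, the diameter part of this boundary integral is identified with $V(u,r,R)$'' is the crux of the lemma and you do not carry it out. You are asserting that a \emph{volume} integral, namely $V=-\frac{1}{2\pi}\iint_{K_R}u\,\Delta\Psi$, coincides with (or dominates) a \emph{boundary} integral of $u$ against harmonic measure on the diameter of the half-disk. That requires an explicit identity or inequality linking the kernel $-\Delta\Psi$ to the half-disk Poisson kernel on the diameter, and your proof contains only the placeholder ``this goes through an explicit computation.'' In the original sources this identification is exactly where the work is done, and it is not a formality: it is what forces the specific choice $\Psi(r,\rho)=-\ln\bigl(1+(\rho/r)^{2\lambda}\bigr)$ rather than any other radial test function. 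Until that computation is supplied, the argument does not yield the stated inequality.
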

Direct explicit calculations of functions and integrals in polar coordinates allow us to obtain the following corollary of Lemma \ref{LemEssen} for \textit{positive\/} subharmonic functions.

\begin{lemma}\label{Lem22} Let $u\geq 0$ be a subharmonic function on $\CC$ that is harmonic in some neighborhood of the origin, and $ \frac{1}{2}\leq\lambda\in \RR_+$.  
Then 
there is $C\in \RR_+$ such that 
\begin{equation*}
\mathsf{M}_u(r)\leq  4\lambda^2\int_0^{R}\frac{\mathsf{C}_u(t)}{t\bigl((r/t)^{\lambda}+(t/r)^{\lambda}\bigr)^2}\dd t
+C\mathsf{M}_u(6R)\Bigl(\frac{r}{R}\Bigr)^{2\lambda}\quad \text{for all $R\geq 2r>0$}. 
\end{equation*}
Hence, if  $u\geq 0$ is a subharmonic function of zero type over the order $p=2\lambda\geq 1$, i.e., 
$\type_p [u]\overset{\eqref{M0U}}{=}0$, 
  then, for  $r:=1$  and $R\to +\infty$,  we have  the inequality
\begin{equation}\label{MUr}
u(1)\leq  p\int_0^{+\infty}\mathsf{C}_u(t)\frac{\dd t^p}{\bigl(1+t^{p}\bigr)^2}
\overset{\eqref{uC}}{=}p\int_0^{+\infty}\biggl(\frac{1}{2\pi}\int_0^{2\pi}u(te^{i\theta})\dd\theta \biggr)\frac{\dd t^p}{\bigl(1+t^{p}\bigr)^2}. 
\end{equation}

\end{lemma}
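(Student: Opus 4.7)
The plan is to apply Lemma~\ref{LemEssen} to rotated copies of $u$ and then integrate out the angular variable using the explicit polar form of $\Delta \Psi$. Since Lemma~\ref{LemEssen} requires $u \in C^2$ while Lemma~\ref{Lem22} only assumes $u$ subharmonic, I would first reduce to the smooth case by mollifying $u$ against a smooth radial $\eta_\varepsilon$ supported in $\varepsilon\overline{\DD}$: the convolution $u_\varepsilon := u * \eta_\varepsilon$ is $C^\infty$, still $\geq 0$, subharmonic, harmonic on a slightly smaller neighborhood of $0$, and $u_\varepsilon \downarrow u$ as $\varepsilon \downarrow 0$. Since $\mathsf{C}_{u_\varepsilon}(t) \to \mathsf{C}_u(t)$ and $\mathsf{M}_{u_\varepsilon}(\rho) \to \mathsf{M}_u(\rho)$ monotonically, monotone convergence on the right-hand integral recovers the general case from the smooth one.

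For smooth $u$ and fixed $\phi \in [0, 2\pi)$, I would apply Lemma~\ref{LemEssen} to $u_\phi(z) := u(e^{i\phi} z)$, which inherits all hypotheses and satisfies $u_\phi(r) = u(re^{i\phi})$ together with $\mathsf{M}_{u_\phi}(6R) = \mathsf{M}_u(6R)$. Using $d\mathfrak{m}_2 = \rho\,d\rho\,d\theta$ and the closed form $-\rho \Delta \Psi(r, w) = 4\lambda^2 \rho^{-1}\bigl((r/\rho)^\lambda + (\rho/r)^\lambda\bigr)^{-2}$, the sector integral defining $V(u_\phi, r, R)$ factors as
\begin{equation*}
V(u_\phi, r, R) = \frac{4\lambda^2}{2\pi} \int_0^R \frac{1}{\rho\bigl((r/\rho)^\lambda + (\rho/r)^\lambda\bigr)^2} \left(\int_{-\pi/(2\lambda)}^{\pi/(2\lambda)} u\bigl(\rho e^{i(\theta + \phi)}\bigr)\,d\theta\right) d\rho.
\end{equation*}
Because $\lambda \geq 1/2$ makes the sector aperture $\pi/\lambda$ at most $2\pi$ and $u \geq 0$, the inner angular integral is bounded by $\int_0^{2\pi} u(\rho e^{i\theta})\,d\theta = 2\pi\,\mathsf{C}_u(\rho)$; the $1/(2\pi)$ then cancels. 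Taking $\sup_\phi$ on the left side of Lemma~\ref{LemEssen} produces $\mathsf{M}_u(r)$ and yields the first displayed inequality of Lemma~\ref{Lem22}.

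For \eqref{MUr}, I would set $r := 1$ and $p := 2\lambda \geq 1$, and send $R \to +\infty$. The hypothesis $\type_p[u] = 0$ gives $\mathsf{M}_u(6R)/R^{2\lambda} = 6^p\,\mathsf{M}_u(6R)/(6R)^p \to 0$, killing the error term. The algebraic identity $\bigl((1/t)^\lambda + t^\lambda\bigr)^2 = t^{-p}(1+t^p)^2$ converts $\frac{dt}{t\bigl((1/t)^\lambda + t^\lambda\bigr)^2}$ into $\frac{t^{p-1}\,dt}{(1+t^p)^2} = \frac{1}{p}\,\frac{dt^p}{(1+t^p)^2}$; since $4\lambda^2 = p^2$, the surviving bound becomes $p \int_0^{+\infty} \mathsf{C}_u(t)\,\frac{dt^p}{(1+t^p)^2}$, exactly \eqref{MUr}.

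The only delicate point is that mollification must commute with the rotation by $\phi$, which holds automatically since $\eta_\varepsilon$ is radial: $(u * \eta_\varepsilon)_\phi = u_\phi * \eta_\varepsilon$. Handling the boundary case $\lambda = 1/2$ (excluded from the strict inequality in Lemma~\ref{LemEssen}) can be done either directly, since the sector becomes the full plane and the angular step is an equality, or by approximation $\lambda' \downarrow 1/2$ if the constant $C$ stays bounded. Everything else is direct polar computation and a routine monotone limit in $\varepsilon$, so I do not anticipate a genuine obstacle beyond bookkeeping the regularity reduction.
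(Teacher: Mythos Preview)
Your proposal is correct and follows exactly the route the paper indicates: the paper explicitly omits the derivation of Lemma~\ref{Lem22} from Lemma~\ref{LemEssen}, describing it only as ``direct explicit calculations of functions and integrals in polar coordinates'' and referring to analogous computations in \cite{Essen}, \cite{Kha92}, \cite{Kha94}. Your rotation-by-$\phi$ trick to pass from $u(r)$ to $\mathsf{M}_u(r)$, the polar factorization of $V(u_\phi,r,R)$ via the closed form of $-\rho\Delta\Psi$, the bound of the angular integral by $2\pi\,\mathsf{C}_u(\rho)$ using $u\geq 0$ and $\pi/\lambda\leq 2\pi$, and the mollification to handle non-$C^2$ $u$ together constitute precisely the omitted ``direct explicit calculation,'' so there is nothing to contrast.
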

We omit the detailed derivation of Lemma \ref{Lem22} from Lemma \ref{LemEssen}  for two reasons. Firstly, similar arguments of this kind were carried out in \cite[Calculations (4.10)--(4.15)]{Essen}, \cite[Lemmas 2, 3]{Kha92}, \cite[\S~1]{Kha94}. 
Secondly, we have proved much more general inequalities of this type based on the works \cite{Kha03}, \cite{Kha21}, and the main result in~\cite[Theorem 3]{KhaMen21}. These inequalities will be described elsewhere, and Lemma \ref{Lem22} will be deduced from them as a very special case.

\section{Proof of  the Main Theorem}

If $u=-\infty$ or the right side of \eqref{esh0} is equal to $+\infty$ for all $z\neq 0$, then any entire function $h$ with 
\begin{equation*}
 0\neq   h(0)\leq \mathsf{C}_u(0)\int_0^{+\infty}\frac{p^2t^{p-1}\dd t}{(1+t^{p})^2}=
\mathsf{C}_u(0)p=pu(0)
\end{equation*} 
is suitable. If the integral in \eqref{esh0} is finite for some  $z\in \CC\setminus \{0\}$, then there is  $c\in \RR_+\setminus \{0\}$ such that 
\begin{equation}\label{icT}
    \int_0^{+\infty}
\mathsf{C}_u^+\bigl( ct\bigr)\frac{t^{p-1}\dd t}{(1+t^p)^2}<+\infty. 
\end{equation}
The change of variable $ct$ by new variable in the integrand entails the finiteness of the integral \eqref{iT}. 
Conversely, if the integral \eqref{iT} is finite, then applying the inverse substitution gives the finiteness of the integral \eqref{icT} for each $c\in \RR_+\setminus \{0\}$.
Therefore, we can further assume  \eqref{iT} for $u\neq -\infty$. 

We use the representation $u=U+\ln|F|$ of  Lemma \ref{Lemr} and have 
\begin{gather*}
\mathsf{C}_{\ln|F|}(r)=\ln|F(0)|\neq -\infty\quad\text{for all $r\in \RR_+$, $F(0)\neq 0$}, 
\\
    p^2 \int_0^{+\infty}
\mathsf{C}_{\ln|F|}\bigl(|z|t\bigr)\frac{t^{p-1}\dd t}{(1+t^p)^2}
\underset{z\in \CC}{=}p^2 \int_0^{+\infty}
\frac{t^{p-1}\dd t}{(1+t^p)^2}\cdot \ln|F(0)|\underset{z\in \CC}{=}p\ln|F(0)|.
\end{gather*}
 Hence, for entire function  $G:=|F(0)|^p/F$ we obtain 
\begin{equation*}
    \ln|F|+\ln|G|=\ln \Bigl|F\cdot  \frac{|F(0)|^p}{F} \Bigr|=p\ln|F(0)|\underset{z\in \CC}{=}
p^2 \int_0^{+\infty}\mathsf{C}_{\ln|F|}\bigl(|z|t\bigr)\frac{t^{p-1}\dd t}{(1+t^p)^2}.
\end{equation*}
Thus, our Main Theorem is proven for special  subharmonic function $u:=\ln|F|$ where $F$ is an entire function without zeros, 
and, due to the additivity of inequality \eqref{esh0} with respect to the addition of functions $u$,  it suffices to prove the Main Theorem  for subharmonic functions 
\begin{equation}\label{typeu0}
    u=U\neq -\infty,\quad   \type_p[u]=\type_p[U]=0
\end{equation} 
under the condition \eqref{iT}. We  temporarily impose that 
\begin{equation}\label{0}
    u(0)=0. 
\end{equation}

\begin{definition}\label{defJ} А Borel measure $\mu\geq 0$ with compact  support\/ $\operatorname{supp} \mu\subset \CC$ is  a Jensen measure if 
\begin{equation}\label{defJm}
    u(0)\leq \int_{\CC}u\dd \mu\quad\text{for each subharmonic function $u$ on $\CC$}.
\end{equation}
\end{definition}
Let $\mu$ be a Jensen measure with $\operatorname{supp} \mu\subset R_{\mu}\overline{\DD}$, $R_\mu\in \RR_+$. 
Evidently, every Jensen measure $\mu$ is probability and  $\mu(\CC)=1$.
Consider the function
\begin{equation}\label{Udf}
    u^{\mu}\colon w\underset{w\in \CC}{\longmapsto} \int_{\CC}u(wz)\dd \mu(z)
= \int_{R_{\mu}\DD}u(wz)\dd \mu(z) 
\end{equation} 
where $u$ is our function satisfying \eqref{typeu0}--\eqref{0}. 
Such function $u^{\mu}$ is subharmonic  \cite[Theorem 2.4.8]{Rans} 
and harmonic  on some neighborhood of the origin under the conditions \eqref{typeu0}--\eqref{0} since 
$\operatorname{supp} \mu$ is compact  subset in $\CC$.
Besides, by Definition \ref{defJ} we obtain 
\begin{equation*}
 u^{\mu}(w)\overset{\eqref{Udf}}{=}\int_{\CC}u(wz)\dd \mu(z)\overset{\eqref{defJm}}{\geq} u(w\cdot 0)=u(0)\overset{\eqref{0}}{=}0\quad \text{for all $w\in \CC$.}
\end{equation*}
It follows from construction  \eqref{Udf}  that, for any $r\in \RR_+$, 
\begin{equation*}
 \mathsf{M}_{u^{\mu}}(r)\overset{\eqref{Udf}}{\leq} \sup_{|z|\leq r}
\int_{R_{\mu}\DD}u(wz)\dd \mu(z) \leq 
 \sup_{t\leq R_{\mu}}     \mathsf{M}_{u}(tr)\mu(\CC)\leq   \mathsf{M}_{u}(R_{\mu}r).
\end{equation*}
Hence, in view of \eqref{typeu0}, we have $\type_p[u^{\mu}]=0$. 
Therefore, by Lemma \ref{Lem22}  we obtain \eqref{MUr} for $u^{\mu}$ in the role $u$. 
Hence, if we return to the definition \eqref{Udf} of the function $u^{\mu}$, then we have 
\begin{equation*}
\int_{\CC}u\dd \mu \overset{\eqref{Udf}}{=}u^{\mu}(1)\overset{\eqref{MUr}}{\leq} 
p\int_0^{+\infty}\biggl(\frac{1}{2\pi}\int_0^{2\pi}
\int_{\CC}u(te^{i\theta}z)\dd \mu(z)
\dd\theta \biggr)\frac{\dd t^p}{\bigl(1+t^{p}\bigr)^2}.
\end{equation*}
By Fubini's theorem on repeated integrals, we get
\begin{align*}
\int_{\CC}u\dd \mu &\leq 
p\int_0^{+\infty}\frac{1}{2\pi}\int_0^{2\pi}
\int_{\CC}u(te^{i\theta}z)\dd \mu(z)
\dd\theta \frac{\dd t^p}{\bigl(1+t^{p}\bigr)^2}\\
&=
\int_{\CC}\underset{M(z)}{\underbrace{p\int_0^{+\infty}\Biggl(\frac{1}{2\pi}\int_0^{2\pi}
u(te^{i\theta}z)\dd\theta\ \Biggr)\frac{\dd t^p}{\bigl(1+t^{p}\bigr)^2}}}\dd \mu(z)
\\&=
\int_{\CC}
\underset{M(z)}{\underbrace{\Biggl(p\int_0^{+\infty}\mathsf{C}_u\bigl(t|z|\bigr)\frac{\dd t^p}{\bigl(1+t^{p}\bigr)^2}\Biggr)}}
\dd \mu(z),
\end{align*}
where the last integrand 
\begin{equation}\label{TI}
M\colon z\underset{z\in \CC}{\longmapsto}p\int_0^{+\infty}\mathsf{C}_u\bigl(t|z|\bigr)\frac{\dd t^p}{\bigl(1+t^{p}\bigr)^2}\in \RR_+
\end{equation}
is finite at each $z\in \CC$ in view of \eqref{icT} and  $M\geq 0$ is subharmonic   \cite[Theorem 2.4.8]{Rans}. Thus, 
\begin{equation}\label{inE}
\int_{\CC}u\dd \mu\leq \int_{\CC} M\dd \mu\quad\text{\it for each Jensen measure $\mu$.}
\end{equation}
Denote by $\mathsf{J}$ the class of Borel measures $\mu$ such that 
\begin{equation}\label{BJ}
    \mathsf{B}_u(1)\overset{\eqref{uB}}{\leq} \int u\dd \mu\quad\text{\it for each subharmonic function $u$ on $\CC$}. 
\end{equation}
If $\mu\in \mathsf{J}$, then $\mu$ is an Jensen measure since 
\begin{equation*}
  u(0)\overset{\eqref{0BC}}{\leq}  \mathsf{B}_u(1)\overset{\eqref{BJ}}{\leq}  \int u\dd \mu\quad\text{\it for each subharmonic function $u$ on $\CC$}.
\end{equation*} 
Therefore, by  \eqref{inE} we obtain  
\begin{equation}\label{inEJ}
\int_{\CC}u\dd \mu\leq \int_{\CC} M\dd \mu\quad\text{\it for each measure $\mu\in \mathsf{J}$.}
\end{equation}

\begin{lemma}[{special case of \cite[Corollary 1.2]{KudMenKha24}}]\label{22} If $u\neq -\infty$ and $M\neq -\infty$ are subharmonic functions such that 
\eqref{inEJ} holds, then   there exists a subharmonic   function $v\neq -\infty$ such that 
\begin{equation}\label{esvp0}
    u(z)+v(z)\leq M(z)\quad\text{for all $z\in \CC$}.
\end{equation}
\end{lemma}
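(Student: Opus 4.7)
The goal is to construct a subharmonic $v\neq -\infty$ with $v(z)\leq M(z)-u(z)$ for every $z\in\CC$, i.e., to exhibit a subharmonic minorant of the $\delta$-subharmonic function $M-u$. My plan is to prove existence by a Hahn\,--\,Banach / Edwards\,--\,Choquet style duality argument, realizing measures in $\mathsf{J}$ as the \emph{positive cone of test functionals} against which the subharmonic cone is ``large enough.''

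First I would fix a locally convex vector space $X$ of $\delta$-subharmonic functions on $\CC$, large enough to contain $u$, $M$, and all subharmonic minorant candidates, and topologized so that $w\mapsto \int w\,d\mu$ is a continuous linear functional on $X$ for every $\mu\in\mathsf{J}$. Inside $X$ let $\mathcal{K}$ denote the convex cone of subharmonic functions $w\neq -\infty$, and let $X_+$ denote the cone of pointwise non-negative elements. The conclusion of the lemma is equivalent to the membership $M-u\in\mathcal{K}+X_+$. Arguing by contradiction, suppose this membership fails; then by the Hahn\,--\,Banach separation theorem there is a continuous linear functional $\Lambda$ on $X$ with $\Lambda(M-u)<0$ but $\Lambda\geq 0$ on $\mathcal{K}+X_+$. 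Non-negativity on $X_+$ identifies $\Lambda$ with a non-negative Radon measure $\mu$, and non-negativity on $\mathcal{K}$, after a suitable affine normalization anchored to the ball-mean functional $w\mapsto \mathsf{B}_w(1)$, forces $\mu$ to satisfy $\mathsf{B}_w(1)\leq \int w\,d\mu$ for every subharmonic $w$. This is exactly the condition $\mu\in\mathsf{J}$ from \eqref{BJ}. But then $\int(M-u)\,d\mu<0$ directly contradicts the hypothesis \eqref{inEJ}, completing the argument.

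The main technical obstacle is calibrating the topology of $X$ so that the dual cone to $\mathcal{K}$ is identified with $\mathsf{J}$ \emph{exactly}, rather than a strictly larger class of Jensen-like measures (which would require a stronger assumption than \eqref{inEJ}) or a strictly smaller one (which would leave gaps in the separation step). In particular, the constant $\mathsf{B}_u(1)$ appearing in \eqref{BJ} must enter through an affine normalization that anchors $\mathcal{K}$ to the ball-mean functional at the origin; classical Edwards duality produces Jensen measures at a point, whereas here we need the \emph{ball-mean-dominating} class $\mathsf{J}$. Handling this delicate affine shift is precisely what the general framework of \cite[Corollary 1.2]{KudMenKha24} accomplishes. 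A secondary concern is ensuring the minorant $v$ delivered by the duality is genuinely finite and not identically $-\infty$: once one admissible candidate is known to exist, the Perron envelope of the family $\{w\text{ subharmonic} : w\leq M-u\}$ (regularized upper semicontinuously) is subharmonic and $\neq -\infty$ by the hypothesis $u,M\neq -\infty$, and serves as the required $v$.
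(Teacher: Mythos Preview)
The paper does not prove this lemma at all: it is stated as a special case of \cite[Corollary 1.2]{KudMenKha24} and immediately applied, with no argument given. So there is no ``paper's proof'' against which to compare your attempt.

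Your sketch correctly identifies the natural strategy---an Edwards-type duality/separation argument producing a subharmonic minorant of the $\delta$-subharmonic function $M-u$---and this is almost certainly the mechanism behind the cited result. However, your write-up is not a self-contained proof: the decisive technical step (choosing the ambient space $X$ and its topology so that the separating functional obtained from Hahn--Banach is represented by a \emph{compactly supported} measure satisfying exactly the ball-mean domination condition \eqref{BJ}, rather than merely a Jensen measure at $0$ or some weaker object) is left open, and you yourself defer it to \cite{KudMenKha24}. In other words, at the point where the real work happens, your proposal and the paper do the same thing: they cite the external reference. The additional concern you raise about $v\not\equiv -\infty$ is legitimate but minor once existence of \emph{some} subharmonic minorant is known; the Perron-envelope fix you describe is standard.

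In short: your outline is sound as a roadmap, but it is a sketch rather than a proof, and it coincides with the paper's treatment in that both rely on \cite{KudMenKha24} for the substance.
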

By Lemma \ref{22},  there exists a subharmonic   function $v\neq -\infty$ such that 
\begin{equation}\label{esvp}
    u(z)+v(z)\overset{\eqref{esvp0}}{\underset{z\in \CC}{\leq}} M(z)\overset{\eqref{TI}}{\underset{z\in \CC}{=}}
p\int_0^{+\infty}\mathsf{C}_u\bigl(t|z|\bigr)\frac{\dd t^p}{\bigl(1+t^{p}\bigr)^2}.
\end{equation}

We proceed to the proof of the existence of an entire function $h$ for which   \eqref{esh0} holds.

\begin{lemma}[{\cite[Corollary 2]{KhaBai16}, see also \cite[Lemma 5.1]{BaiKha17}, \cite[Theorem 1]{Kha21e}}]\label{LemmaKhB} For every subharmonic function $v\neq -\infty$ on $\CC$ and for each number $N\in \RR_+$ there is an entire function $h\neq 0$ such that 
\begin{equation}\label{lnhv}
     \ln |h(z)|\underset{z\in \CC}{\leq} \frac{1}{2\pi}\int_0^{2\pi}v\biggl(z+\frac{1}{\bigl(1+|z|\bigr)^N}e^{i\theta}\biggr)\dd \theta.
\end{equation}
\end{lemma}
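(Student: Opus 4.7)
The plan is to construct $h$ as a Weierstrass canonical product whose zero set discretizes the Riesz measure $\mu_v:=\frac{1}{2\pi}\bigtriangleup v\geq 0$ of $v$ at the local scale $r(z):=\bigl(1+|z|\bigr)^{-N}$. Decompose $v=U+H$ via the Weierstrass\,--\,Hadamard\,--\,Brelot representation, where $U$ is a logarithmic potential of $\mu_v$ (with Weierstrass convergence factors) and $H$ is harmonic on $\CC$. Since every harmonic function on $\CC$ equals $\ln|g|$ for an entire $g$ without zeros (Weyl's lemma together with \cite[Theorem 1.1.2(b)]{Rans}, as already used in the proof of Lemma \ref{Lemr}), it suffices to produce an entire $h_0$ satisfying \eqref{lnhv} with $U$ in place of $v$ and then take $h:=h_0/g$.

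Next I would partition $\CC$ into a locally finite family of Borel cells $\{Q_j\}$ satisfying $\operatorname{diam}(Q_j)\leq c\,r(z_j)$ for some reference point $z_j\in Q_j$, together with the integrality condition $\mu_v(Q_j)\in \NN_0$; this is achieved by refining a dyadic annulus partition and redistributing fractional masses to neighboring cells. In each cell I would concentrate all the mass at a single \emph{logarithmic barycenter} $w_j\in Q_j$, chosen so that $\mu_v(Q_j)\ln|z-w_j|$ agrees with $\int_{Q_j}\ln|z-w|\dd \mu_v(w)$ to first order in $\operatorname{diam}(Q_j)/|z-z_j|$. The entire function $h_0$ is then the Weierstrass product over $\{w_j\}$, each point repeated with multiplicity $\mu_v(Q_j)$, with convergence factors of genus high enough to guarantee convergence on compacta.

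The inequality \eqref{lnhv} is then established by splitting the discretization error
\[
\ln|h_0(z)|-U(z)=\sum_j\Bigl(\mu_v(Q_j)\ln|z-w_j|-\int_{Q_j}\ln|z-w|\dd \mu_v(w)\Bigr)
\]
into a \emph{far-field} piece (cells with $\operatorname{dist}(z,Q_j)\gg r(z)$) and a \emph{near-field} piece supported in $\{w:|w-z|\leq c\,r(z)\}$. The barycenter choice makes each far-field summand $O\bigl((\operatorname{diam} Q_j/\operatorname{dist}(z,Q_j))^2\bigr)$, so the far-field sum is uniformly bounded in $z$ and absorbable into a multiplicative renormalization of $h_0$. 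The near-field contribution, the genuine source of growth, is matched against the Riesz--Martin identity
\[
\frac{1}{2\pi}\int_0^{2\pi}v\bigl(z+r(z)e^{i\theta}\bigr)\dd\theta-v(z)=\int_{|w-z|<r(z)}\ln\frac{r(z)}{|w-z|}\dd \mu_v(w),
\]
whose right-hand side is precisely the type of logarithmic-potential term produced by zeros of $h_0$ lying near $z$.

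The main obstacle is the combined requirement that cells respect both the strict $r(z)$-scaling and the integer-mass condition: this forces a Yulmukhametov-style atomization of $\mu_v$, which is the technical heart of the cited works \cite{KhaBai16,BaiKha17,Kha21e}. A secondary but essentially routine point is the choice of Weierstrass genus for the convergence factors --- this is dictated by the local growth of $\mu_v$ and is standard once the zero set is in hand.
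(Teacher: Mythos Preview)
The paper does not prove Lemma~\ref{LemmaKhB}; it is imported from \cite{KhaBai16,BaiKha17,Kha21e} and used as a black box in the proof of the Main Theorem. Your outline is indeed the strategy of those references --- Yulmukhametov-type atomization of the Riesz measure at the variable scale $r(z)=(1+|z|)^{-N}$ --- so there is no competing argument in the present paper to compare against.

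Two points in your sketch need repair. First, a sign slip in the harmonic reduction: with $H=\ln|g|$ harmonic, the circular mean of $v=U+H$ at radius $r(z)$ about $z$ equals the circular mean of $U$ plus $H(z)$ itself, so to pass from \eqref{lnhv} for $U$ to \eqref{lnhv} for $v$ you must take $h:=h_0\cdot g$, not $h_0/g$; with your choice the inequality acquires an unwanted $2H(z)$. Second, and more substantive, the displayed identity
\[
\ln|h_0(z)|-U(z)=\sum_j\Bigl(\mu_v(Q_j)\ln|z-w_j|-\int_{Q_j}\ln|z-w|\dd \mu_v(w)\Bigr)
\]
is written with the bare logarithmic kernel, but for an arbitrary subharmonic $v$ neither $U$ nor $\ln|h_0|$ is a convergent sum of such terms: both must carry Weierstrass primary factors $K_q(z,w)$ of possibly unbounded genus, and the polynomial corrections in $K_q$ do not cancel between the atomized and the continuous potentials, because the measures differ. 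Controlling those extra terms --- and showing that the far-field sum of $O\bigl((\operatorname{diam}Q_j/\operatorname{dist}(z,Q_j))^2\bigr)$ errors is uniformly bounded once the primary-factor derivatives are taken into account --- is where the proofs in \cite{KhaBai16,BaiKha17} do substantive work. Your identification of the integer-mass partition as the main obstacle is apt, but the far-field bookkeeping with primary factors is a second genuine difficulty, not the ``essentially routine'' step you describe.
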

The integration of inequality \eqref{esvp}  entails inequality
\begin{align*}
\frac{1}{2\pi}\int_0^{2\pi}(u+v)\biggl(z+\frac{1}{\bigl(1+|z|\bigr)^N}e^{i\theta}\biggr)\dd \theta
&\overset{\eqref{esvp}}{\underset{z\in \CC}{\leq}} 
\frac{1}{2\pi}\int_0^{2\pi}M\biggl(z+\frac{1}{\bigl(1+|z|\bigr)^N}e^{i\theta}\biggr)\dd \theta\\
&\overset{\eqref{TI}}{\underset{z\in \CC}{=}}
\frac{1}{2\pi}\int_0^{2\pi}p\int_0^{+\infty}\mathsf{C}_u\Biggl(t\biggl|z+\frac{1}{\bigl(1+|z|\bigr)^N}e^{i\theta}\biggr|\Biggr)\frac{\dd t^p}{(1+t^{p})^2}\dd \theta.
\\
&\underset{z\in \CC}{\leq} p\int_0^{+\infty}\mathsf{C}_u\Biggl(t\biggl(|z|+\frac{1}{\bigl(1+|z|\bigr)^N}\biggr)\Biggr)\frac{\dd t^p}{(1+t^{p})^2},
\end{align*}
Here, on the left-hand side, for the subharmonic function $u$, we have the inequality
\begin{equation*}
u(z)\underset{z\in \CC}{\leq}\frac{1}{2\pi}\int_0^{2\pi}u\biggl(z+\frac{1}{\bigl(1+|z|\bigr)^N}e^{i\theta}\biggr)\dd \theta,
\end{equation*}
and  by Lemma \ref{LemmaKhB}, \textit{for each} $N\in \RR_+$, there exists  an entire function $h\neq 0$ such that \eqref{lnhv} holds. Thus, 
\begin{equation}\label{u+h}
    u(z)+\ln\bigl|h(z)\bigr|\underset{z\in \CC}{\leq} \int_0^{+\infty}\mathsf{C}_u\Biggl(t\biggl(|z|+\frac{1}{\bigl(1+|z|\bigr)^N}\biggr)\Biggr)\frac{p\dd t^p}{(1+t^{p})^2}=:I(z).  
\end{equation}
By replacing the variable
\begin{equation}\label{repz}
    x:=c_zt, \text{ where }c_z\underset{z\in \CC}{:=}|z|+\frac{1}{\bigl(1+|z|\bigr)^N}\underset{z\in \CC}{\geq} |z|,
\end{equation}
we transform the last integral $I(z)$ into the form
\begin{equation}\label{Ie}
    I(z)=\int_0^{+\infty}\mathsf{C}_u(c_zt)\frac{p\dd t^p}{(1+t^{p})^2}
=\int_0^{+\infty}\mathsf{C}_u(x)\frac{c_z^p    p\dd x^p}{(c_z^p+t^{p})^2}
\overset{\eqref{repz}}{\leq}
\int_0^
{+\infty}\mathsf{C}_u(x)\frac{c_z^p    p\dd x^p}{\bigl(|z|^p+x^{p}\bigr)^2} 
\end{equation}
where $\mathsf{C}_u(x)\geq 0$ in view of \eqref{0} and \eqref{0BC}. 
Besides, it follows from the elementary inequality 
\begin{equation*}
    (x+y)^p\leq x^p+p(x+y)^{p-1}y \quad\text{when $x,y\in \RR_+$, $p\geq 1$,}  
\end{equation*} 
that, for $x:=|z|$,  $y:=\dfrac{1}{(1+|z|)^N}$ and $p\geq 1$, 
\begin{equation*}
 c_z^p\overset{\eqref{repz}}{\underset{z\in \CC}{=}}\biggl(|z|+\frac{1}{\bigl(1+|z|\bigr)^N}\biggr)^p\leq 
|z|^p+p   \biggl(|z|+\frac{1}{\bigl(1+|z|\bigr)^N}\biggr)^{p-1} \frac{1}{(1+|z|)^N}
\leq |z|^p+p\bigl(1+|z|\bigr)^{p-1-N}
\end{equation*}
Therefore, we can continue inequality  \eqref{Ie} as
\begin{multline*}
I(z)\leq \int_0^{+\infty}\mathsf{C}_u(x)\frac{|z|^p    p\dd x^p}{(|z|^p+t^{p})^2}
+
p\bigl(1+|z|\bigr)^{p-1-N}
\int_0^{+\infty}\mathsf{C}_u(x)\frac{p\dd x^p}{(|z|^p+x^{p})^2}
\\
=\int_0^{+\infty}\mathsf{C}_u\bigl(t|z|\bigr)\frac{ p\dd t^p}{(1+t^{p})^2}
+\bigl(1+|z|\bigr)^{p-1-N}\int_0^{+\infty}\mathsf{C}_u(x)\frac{p\dd x^p}{(|z|^p+x^{p})^2}.
\end{multline*}
Hence for $|z|\geq 1$, we obtain 
\begin{equation}\label{Icc0}
I(z)\underset{|z|\geq 1}{\leq} 
\int_0^{+\infty}\mathsf{C}_u\bigl(t|z|\bigr)\frac{ p\dd t^p}{(1+t^{p})^2}
+\bigl(1+|z|\bigr)^{p-1-N}\underset{c}{\underbrace{\int_0^{+\infty}\mathsf{C}_u(x)\frac{p\dd x^p}{(1+x^{p})^2}}}
\end{equation} 
where by \eqref{icT}, the last integral is a finite constant $c\in\RR_+$, independent of $N$. 
Due to the arbitrariness in choosing a fixed value of $N$, we can put $N:=p-1$. Then \eqref{Icc0} entails 
\begin{equation*}
I(z)\underset{|z|\geq 1}{\leq} 
\int_0^{+\infty}\mathsf{C}_u\bigl(t|z|\bigr)\frac{ p\dd t^p}{(1+t^{p})^2}+c\quad\text{for all $z\in \CC\setminus \DD$.}
\end{equation*} 
 Thus, according to \eqref{u+h} we obtain 
\begin{equation*}
    u(z)+\ln\bigl|h(z)\bigr|\leq I(z)\leq 
\int_0^{+\infty}\mathsf{C}_u\bigl(t|z|\bigr)\frac{ p\dd t^p}{(1+t^{p})^2}+c\quad\text{for all $z\in \CC\setminus \DD$.}  
\end{equation*}
Hence for a constant $c_1:=\bigl(\mathsf{M}_{u+\ln|h|}(1)\bigr)^+$,  we have 
\begin{equation*}
    u(z)+\ln\bigl|h(z)\bigr|\leq 
\int_0^{+\infty}\mathsf{C}_u\bigl(t|z|\bigr)\frac{ p\dd t^p}{(1+t^{p})^2}+c_1+c\quad\text{for all $z\in \CC$.}  
\end{equation*}
Here, if we consider the entire function $he^{-c-c_1}\neq 0$ instead of the function $h$, retaining the designation $h$ for it, we get the desired inequality  \eqref{esh0}. 
To get rid of the constraint $u(0)\overset{\eqref{0}}{=}0$, consider the proven inequality \eqref{esh0} for the function $u-u(0)$:
\begin{align*}
    u(z)-u(0)+\ln|h(z)|&\underset{z\in \CC}{\leq}p^2 \int_0^{+\infty}
\mathsf{C}_{u-u(0)}\bigl(|z|t\bigr)\frac{t^{p-1}\dd t}{(1+t^p)^2}
\\
&\underset{z\in \CC}{=}p^2 \int_0^{+\infty}\mathsf{C}_{u}\bigl(|z|t\bigr)\frac{t^{p-1}\dd t}{(1+t^p)^2}
-u(0)
p^2 \int_0^{+\infty}\frac{t^{p-1}\dd t}{(1+t^p)^2}\\
&\underset{z\in \CC}{=}p^2 \int_0^{+\infty}\mathsf{C}_{u}\bigl(|z|t\bigr)\frac{t^{p-1}\dd t}{(1+t^p)^2}
-pu(0).
\end{align*}
Here, if we consider the entire function $he^{(p-1)u(0)}\neq0$ instead of the function $h$, retaining the designation $h$ for it, we get the required inequality \eqref{esh0} for any value  $u(0)\in \RR$. 

The proof of Theorem \ref{Th1} is complete. 

\section{Proofs of results for entire and meromorphic functions}

\subsection{Proofs  of main results for entire 
functions}
\begin{proof}[Proof of Theorem\/ {\rm\ref{Cor1}}]
If $f(0)\neq 0$, then the subharmonic function  $u=\ln|f|$ is harmonic 
 in some neighborhood of the origin. By the Main Theorem there is an entire function $h\neq 0$ such that 
\begin{equation*}
    \bigl(\ln|fh|\bigr) (z)\underset{z\in \CC}{=}\ln \bigl|f(z)\bigr|+\ln\bigl|h(z)\bigr|
\overset{\eqref{esh0}}{\underset{z\in \CC}{\leq}} \int_0^{+\infty} \mathsf{C}_{\ln|f|}\bigl( |z|t\bigr)\frac{p^2t^{p-1}\dd t}{(1+t^p)^2}
\quad\text{for all $z\in \CC$}.
\end{equation*}
Hence we obtain 
\begin{equation*}
    \mathsf{M}_{\ln|fh|}(r)=\sup_{|z|=r}\bigl(\ln|fh|\bigr) (z)\underset{r\in \RR_+}{\leq}
\int_0^{+\infty} \mathsf{C}_{\ln|f|}( rt)\frac{p^2t^{p-1}\dd t}{(1+t^p)^2},
\quad\text{for all $r\in \RR_+$}.
\end{equation*}
By  equalities  \eqref{MCTu}, this gives \eqref{eshe}.  

If we have \eqref{M/C}, then by the following  relations
 \begin{equation*}
    \ln M(r;fh)\overset{\eqref{MCTu}}{=}\mathsf{M}_{\ln|fh|}\geq  \mathsf{C}_{\ln|fh|}=\mathsf{C}_{\ln|f|}+\mathsf{C}_{\ln|h|}
=C(r;f)+\mathsf{C}_{\ln|h|},
\end{equation*}
we obtain 
\begin{equation*}
 \liminf_{r\to +\infty} \mathsf{C}_{\ln|h|}\leq \liminf_{r\to +\infty}( \ln M(r;fh)-C(r;f))=
 \ln \liminf_{r\to +\infty} \frac{M(r;fh)}{\exp C(r;f)}\overset{\eqref{M/C}}{=}-\infty. 
\end{equation*}
Hence $\mathsf{C}_{\ln|h|}=-\infty$ and $h=0$ under condition   \eqref{M/C}. 
This completes the proof of Theorem \ref{Cor1}.
\end{proof}

\begin{proof}[Proof of Corollary {\rm \ref{Coruniq}}] 
By condition \eqref{eqCrs}, there is a constant $c\in \RR_+$ such that
\begin{equation*}
C(r;f)\leq \sigma t^{\rho}+c\quad\text{for all $r\in \RR_+$.}
\end{equation*}
For 
$p:= \max\{1,2\rho\}\geq 1$,   
 by Theorem \ref{Cor1},  there is an entire function $h_p\neq 0$ such that 
\begin{equation*}
\ln  M(r;fh_p)
\leq 
\int_0^{+\infty}
\frac{p^2(\sigma (tr)^{\rho}+c)t^{p-1}\dd t}{(1+t^p)^2}
=p\sigma r^{\rho}\int_0^{+\infty}
\frac{ t^{\rho}\dd t^p}{(1+t^p)^2}+pc
\quad\text{for all $r\in \RR_+$},
\end{equation*}
 where the right side is the Euler integral \cite[Ch.~V, \S~2, {\bf 74}, Example 3]{LSh}  with a multiplier:
\begin{equation*}
\int_0^{+\infty}\frac{ t^{\rho}\dd t^p}{(1+t^p)^2}=
\frac{\rho}{p}\int_0^{+\infty}\frac{ x^{\frac{\rho}{p}-1}\dd x}{1+x}
=\frac{\rho}{p}\cdot \frac{\pi}{\sin \frac{\rho}{p} \pi}
\overset{\eqref{Paley}}{=}
\frac{1}{p}P(\rho).
\end{equation*}
Hence we obtain  
\begin{equation}\label{eshers}
\ln  M(r;fh_p) \leq p\sigma r^{\rho}\frac{1}{p}P(\rho)+cp
=P(\rho)\sigma r^{\rho}+cp
\quad\text{for all $r\in \RR_+$},
\end{equation}
and the function $h:=h_pe^{-cp}$ gives  \eqref{Mh}.

In the case of $\rho=1$, the Paley constant $P(1)$ cannot be reduced in  \eqref{Mh} for the function $f\colon z\mapsto 1/z\Gamma(z)$, where $\Gamma$ is the  gamma function. In the case of $0<\rho\neq 1$, the Paley constant $P(1)$ cannot be reduced in  \eqref{Mh} for the function 
$f\colon z \mapsto E_{\rho}(z;1)$, where $E_{\rho}(\cdot;1)$ is the Mittag-Leffler function of order $\rho$ \cite[Ch.~III, \S~2]{Dzh}, \cite{PoSed}. We do not stop here for detailed calculations.
\end{proof}

\subsection{Proofs  of main results on distribution of roots for entire 
functions}

\begin{proof}[Proof of Theorem\/ {\rm\ref{Th2}}] There is an entire function $f_{\mathsf{Z}}$ such that $\Zero_{f_{\mathsf{Z}}}=\mathsf{Z}$ and $f_{\mathsf{Z}}(0)\neq 0$ in view of $\mathsf{Z}(0)=0$. In particular, we have 
\begin{equation*}
N_{\Zero_{f_{\mathsf{Z}}}}(r) =  N_{\mathsf{Z}}(r)\overset{\eqref{NZ}}{=}
\int_0^r\frac{\mathsf{Z}^{\rad}(t)}{t}\dd t=C(r,{f_{\mathsf{Z}}})-\ln \bigl|{f_{\mathsf{Z}}}(0)\bigr|
\quad\text{for all $r\in \RR_+$}.
\end{equation*} 
Then, by Theorem \ref{Cor1}, for every $p\geq 1$  there is an entire function $h\neq 0$ such that 
\begin{equation*}
\ln M(r;f_{\mathsf{Z}}h)
\overset{\eqref{eshe}}{\leq} 
\int_0^{+\infty} C( rt; {f_\mathsf{Z}})\frac{p^2t^{p-1}\dd t}{(1+t^p)^2}
=\int_0^{+\infty}
N_\mathsf{Z}(rt)\frac{p^2t^{p-1}\dd t}{(1+t^p)^2}+p\ln \bigl|{f_{\mathsf{Z}}}(0)\bigr|
\quad\text{for all $r\in \RR_+$},
\end{equation*}
Hence, the entire function $f:=f_{\mathsf{Z}}h\bigl|{f_{\mathsf{Z}}}(0)\bigr|^{-p}$ is 
the desired one.
\end{proof}
We omit the proof of Corollary \ref{CoruniqZ}, since it is constructed similarly to the proof of Corollary \ref{Coruniq}.

\subsection{Proofs  of main results on meromorphic functions}

\begin{proof}[Proof of Theorem\/ {\rm\ref{Th3}}] There is a representation  $F=\dfrac{f_0}{g_0}$ as the ratio of entire functions $f_0$ and $g_0$
without common roots. Then  \cite[Ch. I,\S~4]{GO}
\begin{equation}\label{u_F}
    T(r;F) \underset{r\in \RR_+}{=}\mathsf{C}_{u_F}(r), \quad\text{where $u_F:=\sup\bigl\{\ln|f_0|, \ln|g_0|\bigr\}$}.
\end{equation}
By the Main Theorem, for every $p\geq 1$  there is an entire function $h\neq 0$ such that 
\begin{equation*}
    u_f(z)+\ln|h(z)|\leq p^2 \int_0^{+\infty}
\mathsf{C}_{u_F}\bigl(|z|t\bigr)\frac{t^{p-1}\dd t}{(1+t^p)^2}
\overset{\eqref{u_F}}{=}\int_0^{+\infty}
T\bigl(|z|t;F\bigr)\frac{p^2t^{p-1}\dd t}{(1+t^p)^2}
\quad\text{for all $z\in \CC$.}
\end{equation*}
Hence, by \eqref{u_F},  we obtain   
\begin{equation*}
\sup \{\ln M(r;f_0h),\ln M(r;g_0h)\} \overset{\eqref{u_F}}{=}    \mathsf{M}_{u_F+\ln|h|} (r)
\leq \int_0^{+\infty}
T(rt;F)\frac{p^2t^{p-1}\dd t}{(1+t^p)^2}
\quad\text{for all $r\in \RR_+$,}
\end{equation*}
and $F=\dfrac{f_0h}{g_0h}$. If we put $f=f_0h$ and $g:=g_0h$, then the last inequality is inequality \eqref{eshem}.

Suppose that  there is a pair of entire functions $f,g$ such that $F=\dfrac{f}{g}$  and 
\eqref{M/Cm} holds. Then 
\begin{multline*}
 \liminf_{r\to +\infty}\Bigl(\max\bigl\{\mathsf{C}_{\ln|f|}(r),\mathsf{C}_{\ln|g|}(r)\bigr\}- T(r,F)\Bigr)
\\
\overset{\eqref{0BC}-\eqref{MCTu}}{\leq}     \liminf_{r\to +\infty}\Bigl(\max\bigl\{\mathsf{M}_{\ln|f|}(r),\mathsf{M}_{\ln|g|}(r)\bigr\}- T(r,F)\Bigr)\overset{\eqref{M/Cm}}{=}-\infty.
\end{multline*}
 Hence if we use a representation $F=\dfrac{f_0h}{g_0h}$ where  $f_0,g_0\neq 0$ is a pair of entire functions without common roots and 
the  representation \eqref{u_F} for the Nevalinna characteristic of $F$, then we obtain 
   $ \liminf\limits_{r\to +\infty}\mathsf{C}_{\ln|h|}(r)=-\infty$
and $h=0$. This completes the proof of Theorem \ref{Th3}.
\end{proof} 

We omit the proof of Corollary \ref{CoruniqF}, since it is constructed similarly to the proof of Corollary \ref{Coruniq}.


\begin{thebibliography}{99}

\bibitem{GO}
\refitem{book}
A. A. Goldberg and I. V. Ostrovskii, \emph{Value distribution of meromorphic functions}
(Nauka, Moscow, 1970, 592 pp. [in Russian]; English transl., Transl. Math. Monogr., vol. 236,
Amer. Math. Soc., Providence, RI 2008, xvi+488 pp.)

\bibitem{Kha92}
\refitem{article}
B. N. Khabibullin, \textquotedblleft On the type of entire and meromorphic functions\textquotedblright, 
Mat. Sb., \textbf{183} (11), 35--44  (1992);  
Sb. Math., \textbf{77} (2), 293--301  (1994).


\bibitem{EG}
\refitem{book}
L. C. Evans and R. F. Gariepy, \emph{Measure theory and fine properties of functions}
(Stud. Adv. Math., CRC Press, Boca Raton, FL 1992, viii+268 pp.).

\bibitem{Kha22AA}
\refitem{article}
 B. N. Khabibullin, \textquotedblleft  The Nevanlinna characteristic and integral inequalities with maximum radial characteristic for meromorphic functions and for the differences of subharmonic functions\textquotedblright,   Algebra i Analiz, \textbf{34} (2), 152--184  (2022); St. Petersburg Math. J., \textbf{34} (2), 247--270  (2023).


\bibitem{Kha22MS}
\refitem{article}
 B. N. Khabibullin, \textquotedblleft Integrals of a difference of subharmonic functions against measures and the Nevanlinna characteristic\textquotedblright,  Mat. Sb. \textbf{213} (5), 126--166  (2022); Sb. Math. \textbf{213} (5), 694--733  (2022).

\bibitem{Rans}
\refitem{book}
Th.~Ransford, \emph{Potential Theory in the Complex Plane} (Cambridge Univ. Press, Cambridge.
 1995, x+232 pp.)


\bibitem{Priv37}
\refitem{book}
 I. I. Privalov, \emph{Subgarmonicheskie funktsii} (ONTI, M.-L., 1937)  [in Russian].

\bibitem{HK}  
\refitem{book}
 W.K.~Hayman, P.B.~Kennedy, \emph{Subharmonic functions} ( vol. {\bf 1},  Acad. Press, London etc., 1976).

\bibitem{Govorov}
\refitem{article}
 N. V. Govorov, \textquotedblleft On the Paley conjecture\textquotedblright, Funktsional. Anal, i Prilozhen. 3 (1969), no. 2, 41-45;
English transl. in Functional Anal. Appl. 3 (1969).


\bibitem{Govorovbook} 
\refitem{book}
N. V. Govorov, \emph{Riemann’s Boundary Problem with Infinite Index}
(Nauka, Moscow, 1986 [in Russian]; English transl. in Operator Theory: Advances and Applications, vol.  {\bf 67}
eds. 	I.V. Ostrovskii
Birkh\"auser, 2012).



\bibitem{Petrenko}
\refitem{article}
 V. P. Petrenko, \textquotedblleft The growth of meromorphic functions of finite lower order\textquotedblright, 
Izv. Akad. Nauk SSSR, Ser. Mat. \textbf{33} (2), 414--454  (1969); Math. USSR Izv. \textbf{3} (2), 391--432 (1969).



\bibitem{Dahlberg}
\refitem{article}
B. Dahlberg,  \textquotedblleft  Mean values of subharmonic functions\textquotedblright, 
Ark. Mat. \textbf{10}, 293--309  (1972).

\bibitem{Essen}
 M. R.~Ess\'en,   \textquotedblleft The $\cos \pi\lambda$ theorems\textquotedblright
 Lect. Notes in Math. \textbf{467},  1--98 (1975).

\bibitem{Kha94}
\refitem{article}
B. N. Khabibullin,  \textquotedblleft Paley problem for plurisubharmonic functions of finite lower order\textquotedblright,
Mat. Sb.,  \textbf{190} (2),  145--157  (1999); Sb. Math., \textbf{190} (2), 309--321  (1999).

\bibitem{Kha03}
\refitem{article}
   B.\,N.~Khabibullin,  \textquotedblleft Criteria for (sub-)harmonicity and continuation of (sub-)harmonic functions\textquotedblright,
Sibirsk. Mat. Zh., \textbf{44} (4), 905--925  (2003); 
Siberian Math. J., {\bf 44}:4, 713--728 (2003).

\bibitem{Kha21}
\refitem{article}
B. N. Khabibullin, \textquotedblleft Poisson\,--\,Jensen formulas and balayage of measures\textquotedblright, Eurasian Math. J., \textbf{12} (4), 53--73
 (2021).


\bibitem{KhaMen21}
\refitem{article}
B. N. Khabibullin, E. B. Menshikova, \textquotedblleft Balayage of Measures with Respect to Polynomials and Logarithmic Kernels on the Complex Plane\textquotedblright, Lobach. Jour. of Math., \textbf{42} (12), 2823--2833  (2021).

\bibitem{KudMenKha24}
\refitem{article}
 E. G. Kudasheva, E. B. Menshikova, B. N. Khabibullin, “Dual construction and existence of (pluri)subharmonic minorant”, Ufa Math. J., 16:3 (2024), 65–73

\bibitem{KhaBai16}
\refitem{article}
B. N. Khabibullin, T. Yu. Baiguskarov, \textquotedblleft  The Logarithm of the Modulus of a
Holomorphic Function as a Minorant for a Subharmonic Function\textquotedblright, Mat.
Zametki, \textbf{99}(4), 588--602 (2016); Math. Notes, \textbf{99} (4), 576--589  (2016).


\bibitem{BaiKha17}
\refitem{article}
T. Yu. Baiguskarov, B. N. Khabibullin, A. V. Khasanova, \textquotedblleft  The Logarithm of the Modulus of a Holomorphic Function as a Minorant for a Subharmonic Function. II. The Complex Plane\textquotedblright, Mat. Zametki, \textbf{101} (4), 483--502 (2017);
Math. Notes, \textbf{101}:4, 590--607  (2017).

\bibitem{Kha21e}
\refitem{article}
B. N. Khabibullin \textquotedblleft The Logarithm of the Modulus of an Entire Function as a Minorant for a Subharmonic Function outside a Small Exceptional Set\textquotedblright, Azerbaijan Jour. of Math., \textbf{11} (2), 48--59  (2021).

\bibitem{LSh}
\refitem{book}
 M. A. Lavrent’ev and B. V. Shabat, 
\emph{Methods for the theory of functions of a complex variable} (Textbook. 5th ed., rev.,  
Moscow, Nauka, 1987)  [in Russian].

\bibitem{Dzh}
\refitem{book}
 M. M. Dzhrbashyan, \emph{Integral transformations and representations of functions in a complex
domain} (Nauka, Moscow, 1966)  [in Russian].

\bibitem{PoSed}
\refitem{article}
A. Yu. Popov and A. M, Sedletski\u\i,  \textquotedblleft Distribution of roots of Mittag-Leffler functions\textquotedblright,
Sovrem. Mat. Fundam. Napravl. \textbf{40}, 3--171 (2011); 
J. Math. Sci. (N.Y.) \textbf{ 190} (2), 209--409  (2013).


\end{thebibliography}
\end{document}